\setlist[itemize]{leftmargin=*}
\setlist{nosep}
\setlist{nolistsep}
\newcommand{\defn}[1]{{\textit{\textbf{\boldmath #1}}}\xspace}
\renewcommand{\paragraph}[1]{\vspace{0.04in}\noindent{\bf \boldmath #1:}}
\DeclareMathOperator{\Triv}{Triv}
\newcommand{\interior}[1]{ {\kern0pt#1}^{\mathrm{o}} }
\newcommand{\eps}{\varepsilon}
\renewcommand{\Re}{\mathrm{Re}}
\newcommand{\Z}{\mathbb{Z}}
\newcommand{\N}{\mathbb{N}}
\renewcommand{\C}{\mathbb{C}}
\renewcommand{\E}{\mathop{\mathbb{E}}}
\newcounter{HALG@line}
\renewcommand{\theHALG@line}{\thealgorithm.\arabic{ALG@line}}
\newcommand{\floor}[1]{\left\lfloor #1 \right\rfloor}
\newcommand{\ceil}[1]{\left\lceil #1 \right\rceil}
\crefname{equation}{}{} 
\crefname{enumi}{Step}{} 
\theoremstyle{definition}
\newtheorem{defin}{Definition}
\newtheorem{rmk}{Remark}
\newtheorem{prop}{Proposition}
\newenvironment{proposition}{\begin{prop}}{\end{prop}}
\newtheorem{lem}{Lemma}
\newenvironment{lemma}{\begin{lem}}{\end{lem}}
\newtheorem{clm}{Claim}
\newenvironment{claim}{\begin{clm}}{\end{clm}}
\newtheorem{question}{Question}
\newtheorem{cor}{Corollary}
\newtheorem{thm}{Theorem}
\newenvironment{theorem}{\begin{thm}}{\end{thm}}
\newtheorem{conj}{Conjecture}
\newenvironment{conjecture}{\begin{conj}}{\end{conj}}
\newtheorem{prob}{Problem}
\newtheorem*{rep@theorem}{\rep@title}
\newcommand{\newreptheorem}[2]{%
    \newenvironment{rep#1}[1]{%
        \def\rep@title{\cref{##1}}%
        \begin{rep@theorem}}%
            {\end{rep@theorem}}}
\setlist[enumerate]{itemsep=\smallskipamount,parsep=0pt,label={\rm \roman*)}}
\setlist[itemize]{itemsep=\smallskipamount,parsep=0pt}
\title{Graphs With the Same Edge Count in Each Neighborhood
}
\author{Nathan S. Sheffield and Zoe Xi}
\date{}
\begin{document}

\maketitle
\begin{abstract}
    In a recent paper, Caro, Lauri, Mifsud, Yuster, and Zarb ask which
    parameters $r$ and $c$ admit the existence of an $r$-regular graph such that
    the neighborhood of each vertex induces exactly $c$ edges.  They
    show that every $r$ with $c$ satisfying $0\leq c\leq {r\choose
      2}-5r^{3/2}$ is achievable, but no $r$ with $c$ satisfying
    ${r\choose 2}-\lfloor\frac{r}{3}\rfloor\leq c\leq {r\choose 2}-1$
    is.  We strengthen the bound in their nonexistence result from
    ${r\choose 2}-\lfloor\frac{r}{3}\rfloor$ to ${r\choose
      2}-\lfloor\frac{r-2}{2}\rfloor$.  Additionally, when the graph
    is the Cayley graph of an abelian group, we obtain a much more
    fine-grained characterization of the achievable values of $c$
    between $\binom{r}{2} - 5r^{3/2}$ and $\binom{r}{2} -
    \floor{\frac{r-2}{2}}$, which we conjecture to be the correct
    answer for general graphs as well.
    That result relies on a lemma about approximate subgroups in the ``99\% regime,'' quantifying the extent to which nearly-additively-closed subsets of an abelian group must be close to actual subgroups.
    Finally, we consider a generalization to graphs with
    multiple types of edges and partially resolve several open
    questions of Caro et al. about \emph{flip} colorings of graphs.

\end{abstract}

\section{Introduction}
The question of what it means for a graph to ``look the same'' everywhere locally has been extensively studied in structural graph theory.
One weak version of this notion is to require a graph to be $r$-regular --- in other words, to specify that the neighborhood of each vertex is the same size.
A stronger notion is that of a \defn{constant-link graph}. 
In a constant-link graph, not only must each vertex's neighborhood contain the same number of vertices, they must all induce the same subgraph (the ``link'' of the graph).
A substantial literature aims to understand the structure of such graphs.
Arguably the most fundamental question, known as the Trahtenbrot--Zykov problem, is to characterize which graphs can be the link of some constant-link graph.
Answers are known for a few classes of graphs, such as trees, cycles, and graphs on six or fewer vertices~\cite{brown1975graphs,blass1980trees,hall1985graphs}.
However, a result of Buklito shows that the problem of determining whether a general graph $H$ can be the link of a constant-link graph is algorithmically undecidable~\cite{bulitko1973graphs}.\\

In this work, we consider the less-restrictive class of \defn{$(r,c)$-triangle-regular} graphs.
Here, we require that the neighborhood of each vertex has size $r$ and induces exactly $c$ edges, but we do not require the neighborhoods to be isomorphic.
In other words, each vertex has degree $r$ and is involved in exactly $c$ distinct triangles.
The analogous question to the Trahtenbrot--Zykov problem for this notion would be: ``For which $r$ and $c$ does an $(r,c)$-triangle-regular graph exist?''.
Since we are only concerned with total edge counts in each neighborhood and not the exact subgraph, one might hope that a characterization for this problem is more tractable.
And indeed, Caro, Lauri, Mifsud, Yuster, and Zarb show the following theorem.

\begin{theorem}[\cite{flip1}]\label{thm:caro-et-al-rc}

    Let $r, c \in\mathbb{N}$. We have the following.
    
    \begin{enumerate}
    
    \item If $0\leq c\leq {r\choose
      2}-5r^{3/2}$, then there exists an $(r, c)$-triangle-regular graph.
    
    \item If ${r\choose
      2}-\lfloor\frac{r}{3}\rfloor\leq c\leq {r\choose 2}-1$, then there does
      not exist an $(r, c)$-triangle-regular graph.
      
    \end{enumerate}

\end{theorem}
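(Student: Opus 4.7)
The two parts call for essentially different methods.

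\textbf{Part 2 (nonexistence).} Let $k = \binom{r}{2} - c$, so the complement of the link $L(v)$ of each vertex $v$ has exactly $k$ edges on $r$ vertices. My plan is a local rigidity argument: since the complement of $L(v)$ has at most $2k$ vertices of positive degree, at least $r - 2k$ vertices $w \in N(v)$ are adjacent (in $G$) to every other vertex of $N(v)$; call these the \emph{$v$-good} vertices. The critical observation is that each $v$-good $w$ satisfies $N[w] = N[v]$, because $w$ must be adjacent to $v$ and to all of $N(v) \setminus \{w\}$, which already accounts for all $r$ of its neighbors. Consequently $S := N[v]$ (of size $r+1$) is the common closed neighborhood of a set $V_S$ of at least $r - 2k + 1$ vertices (namely $v$ together with its $v$-good neighbors). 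The hypothesis $k \le \lfloor r/3 \rfloor$ rearranges to $|V_S| > k$.

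I would conclude by showing $V_S = S$, which forces the component of $v$ to be a copy of $K_{r+1}$ and hence $k = 0$, contradicting $k \ge 1$. For any putative $u' \in S \setminus V_S$, each vertex of $V_S$ has closed neighborhood $S$ and so lies in $N(u')$; moreover $V_S$ induces a clique. Writing $T := N(u') \setminus V_S$ and letting $b'$ count the vertices of $T$ outside $S$, a direct edge count in $L(u')$ (using that each $w \in V_S$ is adjacent to exactly the vertices of $T \cap S$ among $T$, together with the bound $\binom{|T|}{2}$ on internal edges of $T$) yields $|V_S| \cdot b' \le k$. Combined with $|V_S| > k$, this forces $b' = 0$; a cardinality check then shows $N(u') = S \setminus \{u'\}$, so $u' \in V_S$, a contradiction.

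\textbf{Part 1 (existence).} For $c = 0$ any $r$-regular triangle-free graph (e.g., $K_{r,r}$) suffices, and for $c = \binom{r}{2}$ disjoint copies of $K_{r+1}$ work. For intermediate $c$ my plan is to use abelian Cayley graphs $\mathrm{Cay}(\Z_n, S)$: these are $|S|$-regular, and the number of triangles through any vertex equals the count of ordered pairs $(s_1, s_2) \in S \times S$ with $s_1 - s_2 \in S$, which is a tractable additive quantity. Tuning $S$ and combining via disjoint unions should cover the desired range. I expect the main obstacle to lie here rather than in Part 2: realizing \emph{every} integer $c \in [0, \binom{r}{2} - 5r^{3/2}]$ is much more delicate than covering a dense subset, and I suspect the $5r^{3/2}$ gap reflects the granularity of such algebraic constructions near the top of the range.
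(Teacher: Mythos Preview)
This theorem is quoted from \cite{flip1}; the present paper does not give its own proof, so there is no in-paper argument to compare against directly. That said, two remarks are in order.

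Your Part~2 argument is correct and self-contained. It is also genuinely different from the paper's proof of the strengthening \cref{thm:rc-nonexistence}: you show directly that $N[v]$ must be a $K_{r+1}$ component by forcing every $u' \in S \setminus V_S$ back into $V_S$ via the inequality $|V_S|\cdot b' \le k$, whereas the paper's argument for the stronger bound $k \le \lfloor (r-2)/2 \rfloor$ runs a three-case analysis on how vertices of $T(v)$ (those outside $N[v]$) interact with the sets $R(v)$ and $S(v)$. Your route is cleaner at the $\lfloor r/3 \rfloor$ threshold, but it is intrinsically capped there: the pivotal step $|V_S| \ge r - 2k + 1 > k$ requires $3k \le r$, so this approach does not extend to the paper's improved range.

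Part~1, as you yourself flag, is only a plan and not a proof. The construction in \cite{flip1} (referenced at the start of \cref{sec:cayley}) takes Cartesian products of cliques of carefully chosen sizes --- these are indeed abelian Cayley graphs, so your intuition is on the right track --- but realizing \emph{every} integer $c$ in $\left[0, \binom{r}{2} - 5r^{3/2}\right]$ requires an explicit decomposition that you have not supplied. As written, Part~1 has a genuine gap.
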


Since $r$ vertices induce $\le {r\choose 2}$ edges, it is clear that
no $(r,c)$-triangle-regular graph can exist with $c >
\binom{r}{2}$. Note that $c = \binom{r}{2}$ is achieved by the
complete graph $K_{r+1}$.  Thus, \cref{thm:caro-et-al-rc} gives an
exact characterization of the possible values of $c$ outside an
interval of length $O(r^{3/2})$.  The question remains to understand
which values are possible within that interval. In this paper, we
present a tight lower bound on the values of $c$ for which there does
not exist an $(r, c)$-triangle-regular graph and fully characterize
the values of $c$ for which there exists an $(r, c)$-triangle-regular
abelian Cayley graph. \\

\subsection{Main results}

Our first result, in \cref{sec:zoe-upperbound}, is a strengthening of the bounds of \cref{thm:caro-et-al-rc}.

\begin{restatable}{theorem}{rcnonexistence}\label{thm:rc-nonexistence}
    Given a positive integer $r$, for every integer $c$ such that
    ${r\choose 2}-\lfloor\frac{r-2}{2}\rfloor\leq c\leq {r\choose 2}-1$,
    there does not exist an $(r, c)$-triangle-regular graph.   
\end{restatable}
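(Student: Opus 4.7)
My approach is to exploit the \emph{closed-twin} structure that any $(r,c)$-triangle-regular graph with $m := \binom{r}{2} - c$ small must have. Define $[v] := \{w : N[w] = N[v]\}$; since $\bar{N}(v)$ has exactly $m$ edges it has at most $2m$ non-isolated vertices, and a vertex $u \in N(v)$ is isolated in $\bar{N}(v)$ iff $u \in [v]$. So $|[v]| \geq r - 2m + 1 \geq 3$ whenever $m \leq \lfloor (r-2)/2 \rfloor$. I fix a minimum-size twin class $[v]$ of size $k$, let $B := N(v) \setminus [v]$ with $|B| = s := r + 1 - k$, and for $u \in B$ write $d_v(u)$ for its degree in $\bar{N}(v)$.

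Since the twins of $u \in B$ other than $u$ must be adjacent to $u$ and lie in $B$, one gets $|[u]| \leq s - d_v(u)$. Combined with $d_v(u) \geq 1$ for every $u \in B$ (else $u$ would be a twin of $v$) and minimality of $k$, this yields $k \leq r/2$. A short counting argument then rules out the possibility that every $u \in B$ has $d_v(u) \geq 2$: that would give $2m \geq 2s \geq r + 2$, exceeding $(r-2)/2$. Hence some $u^* \in B$ has $d_v(u^*) = 1$; let $u^{*'} \in B$ denote its unique non-neighbor in $N(v)$.

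The crux is that closed twins have identical sets of non-neighbors in $N(v)$, because this set equals $N(v) \setminus N[u]$ and therefore depends only on $N[u]$. Applied to $[u^*]$, this shows every vertex of $[u^*]$ is non-adjacent to $u^{*'}$, so $d_v(u^{*'}) \geq |[u^*]| \geq k$. Applied to $[u^{*'}]$, every vertex of $[u^{*'}]$ has the same $d_v$-degree as $u^{*'}$. Setting $a := |[u^*]|$ and $b := |[u^{*'}]|$, which are disjoint subsets of $B$ both of size at least $k$,
\[
2m = \sum_{u \in B} d_v(u) \;\geq\; a \cdot 1 + b \cdot a + (s - a - b) \cdot 1 \;=\; b(a - 1) + s.
\]
Using $a, b \geq k$ and $s = r + 1 - k$, this simplifies to $2m \geq (k - 1)^2 + r$. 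Substituting $k \geq r - 2m + 1$ and writing $\alpha := r - 2m$, the inequality becomes $\alpha(\alpha + 1) \leq 0$; but $m \leq \lfloor (r-2)/2 \rfloor$ forces $\alpha \geq 2$, a contradiction.

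The main obstacle is pinpointing the right asymmetry: while $[u^*]$ contributes only $1$ per vertex to $\sum d_v$, its partner class $[u^{*'}]$ is forced to contribute at least $|[u^*]| \geq k$ per vertex---a direct consequence of twins preserving match sets. This disparity, coupled with the trivial lower bound $d_v(u) \geq 1$ on the remaining vertices of $B$, makes $2m$ too large to fit in the allowed range.
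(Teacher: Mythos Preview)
Your proof is correct and takes a genuinely different route from the paper's.

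Both arguments begin from the same observation that the closed-twin class $[v]$ (the paper's $R(v)$) has size at least $r-2m+1$, so that $B$ (the paper's $S(v)$) is small. From there the two diverge. The paper fixes an \emph{arbitrary} vertex $v$, introduces the external set $T(v)$ of vertices outside $N[v]$ with a neighbor inside, and runs a three-case analysis on the bipartite degrees between $S(v)$ and $T(v)$; in each case it locates a vertex whose open neighborhood induces fewer than $c$ edges. Your argument instead chooses $v$ to have a \emph{minimum-size} twin class and never leaves $N(v)$: you work entirely in the $m$-edge complement graph $\bar{N}(v)$, find a degree-one vertex $u^*$ there, and then use that closed twins share non-neighbor sets to force the partner $u^{*'}$ (and its entire twin class) to have $\bar{N}(v)$-degree at least $|[u^*]|\ge k$. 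The resulting degree-sum bound $2m \ge (k-1)^2 + r$, combined with $k\ge r-2m+1$, closes the argument without cases.

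What each approach buys: your twin-class counting is cleaner and self-contained within a single neighborhood, avoiding the external set $T(v)$ and the three-way case split; the global minimality of $k$ is the device that replaces that case analysis. The paper's approach, by contrast, does not need to optimize over $v$ and makes the ``too few edges in some neighborhood'' conclusion explicit, which may be more directly reusable if one wanted to push toward sharper or more structural statements about which neighborhoods fail.
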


This bound is tight for even $r$: if $r \equiv 0 \text{ mod 2}$ then there \emph{does} exist an $\left(r, \binom{r}{2} - \frac{r}{2}\right)$-triangle-regular graph.\\

The question that now remains is: for which $c$ with $\binom{r}{2} - 5r^{3/2} < c < \binom{r}{2} - \floor{\frac{r-2}{2}}$ is there an $(r,c)$-triangle-regular graph?
Is there some threshold below which all values are achievable and above which all values are inachievable, or is the interval more spotty?
Is there a value on the order of $\binom{r}{2} - \Omega(r^{3/2})$ that is not achievable, or can we substantially strengthen the existence part of \cref{thm:caro-et-al-rc}?
In \cref{sec:cayley}, we answer these questions for the special case of abelian Cayley graphs.

\begin{restatable}{theorem}{cayleyconstruction}\label{thm:cayley-construction}
    For every $r\in \N$, and every $x, y \in \N$ with $x \leq r$ and $\frac{x^2}{8} + 3x \leq y \leq \frac{x^2}{4}-4x^{3/2}$,
    there exists an $(r,c)$-triangle-regular abelian Cayley graph with $c = \binom{r}{2} - \frac{rx}{2} + y$.
\end{restatable}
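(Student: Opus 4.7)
My plan is to construct the required graph as $\operatorname{Cay}(H, S)$ inside an abelian group $H$ of order $n := r + x + 1$, setting $S := (H \setminus \{0\}) \setminus R$ for a symmetric subset $R \subseteq H \setminus \{0\}$ of size $x$. This automatically makes $|S| = r$, so the graph is $r$-regular, and the task becomes designing $R$ so that the triangle count comes out correctly. A direct computation counting the non-edges in the neighborhood of $0$ --- these are exactly the pairs $\{s_1, s_2\} \subseteq S$ with $s_1 - s_2 \in R$ --- while exploiting that $R$ is symmetric (so that $|\{0, -d\} \cup R \cup (R - d)| = 2x - |R \cap (R - d)|$ for every $d \in R$) gives
\[
c \;=\; \binom{r}{2} - \frac{rx}{2} + \binom{x}{2} - \frac{E(R)}{2}, \qquad E(R) := |\{(a, b) \in R \times R : a + b \in R\}|.
\]
Hence $c = \binom{r}{2} - \tfrac{rx}{2} + y$ if and only if $E(R) = x(x-1) - 2y$, which is the same as saying that $\operatorname{Cay}(H, R)$ is itself an $(x, \binom{x}{2} - y)$-triangle-regular abelian Cayley graph. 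The problem is thus reduced to producing, in some abelian $H$ of order $n$, a symmetric $R$ of size $x$ whose ``additive-energy''-type quantity $E(R)$ takes any prescribed integer value in the range $[\tfrac{x^2}{2} + 8 x^{3/2} - x,\; \tfrac{3x^2}{4} - 7x]$.

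This sub-instance already lies in the regime covered by the existence half of \cref{thm:caro-et-al-rc}, since the target $c^\star := \binom{x}{2} - y$ satisfies $0 \le c^\star \le \binom{x}{2} - 5 x^{3/2}$ (using $y \ge \tfrac{x^2}{8} + 3x \ge 5 x^{3/2}$ for $x$ above a small absolute constant, below which the prescribed $y$-interval is empty). However, one still needs an abelian Cayley realization on a group of exactly the right order, so I would build $R$ directly in $H = \Z/n\Z$ rather than invoking that theorem as a black box. Starting from the baseline symmetric arithmetic progression $R_0 = \{\pm 1, \pm 2, \ldots, \pm x/2\}$, one computes $E(R_0) = \tfrac{3x^2}{4} - \tfrac{3x}{2}$, placing $y_0 = \tfrac{x^2}{8} + \tfrac{x}{4}$ just below the prescribed window. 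One then perturbs $R_0$ by evicting small interior symmetric pairs $\{\pm a\}$ from the AP and replacing them with ``isolated'' symmetric pairs $\{\pm M\}$ placed far from the remaining support, so that the new pairs contribute nothing to $E(R)$: each full eviction decreases $E(R)$ by roughly $6(x/2 - 1) \approx 3x$, and finer ($O(1)$-sized) adjustments are obtained by shifting an AP endpoint by $1$ or by staggering evictions. A suitable two-parameter family of such perturbations should sweep through every integer value of $E(R)$ in the target range.

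The main obstacle I expect is verifying integer-granularity --- i.e., showing that the perturbation family hits \emph{every} admissible integer value of $E(R)$, rather than just a sparse subset --- together with handling boundary effects. The slack terms $+3x$ and $-4x^{3/2}$ in the bounds on $y$ should correspond precisely to these precision costs: the $+3x$ margin above $y_0$ provides the cushion needed to leapfrog the discrete steps of the first few evictions, while the $-4x^{3/2}$ margin below $\tfrac{x^2}{4}$ is (up to constants) the point at which the ``isolated'' pairs would become dense enough --- by an additive-combinatorics pigeonhole of the same flavor as in the proof of \cref{thm:caro-et-al-rc} --- to begin forming accidental triples and break the clean computation of $E(R)$.
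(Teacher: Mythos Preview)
Your complement reduction is correct: taking $S = (H \setminus \{0\}) \setminus R$ in an abelian group $H$ of order $r + x + 1$ does yield $c = \binom{r}{2} - \frac{rx}{2} + \binom{x}{2} - \frac{E(R)}{2}$, so the problem reduces to building a symmetric $R$ of size $x$ with a prescribed value of $E(R)$. This is a genuinely different route from the paper's.

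The paper instead works via Cartesian products: it takes $G_1$ to be a clique (or a clique minus a sparse regular factor) on about $r - x/2 + 1$ vertices, and $G_2$ to be an $(x/2, k)$-triangle-regular abelian Cayley graph supplied by \cref{thm:caro-et-al-rc} (whose constructions are themselves products of cliques, hence abelian Cayley). The product $G_1 \square G_2$ is then an abelian Cayley graph on the product group with the required parameters, and a short parity case split handles whether $x$ is even and the residue of $2r - x$ modulo $4$.

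The advantage of the paper's route is that the ``small'' piece $G_2$ lives in its own group factor, so nothing about its construction is coupled to the value of $r$. Your approach does carry such a coupling: you are forced to realize $R$ inside $\Z/(r+x+1)\Z$, and when $r$ is close to $x$ --- which the hypothesis $x \le r$ certainly allows --- this group has order only about $2x$. In that regime your perturbation scheme breaks down. With the base progression $\{\pm 1, \ldots, \pm x/2\}$ already occupying half of $\Z/(2x+1)\Z$, there are no positions $M$ for which $\{\pm M\}$ is isolated from the AP: for any $M$ outside the AP, some $M + a$ with $a$ in the AP lands back in the AP. So the ``isolated pair'' contributions to $E(R)$ are not zero, and the clean $\approx 3x$-per-eviction accounting you describe does not go through. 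You would need a genuinely different construction of $R$ in this dense regime (and the $4x^{3/2}$ slack does not address this --- it is not a pigeonhole issue among the evicted pairs, but a total lack of room), whereas the Cartesian-product approach sidesteps the issue entirely by decoupling the two factors.
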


\begin{restatable}{theorem}{cayleynonexistence}\label{thm:rc-cayley-nonexistence}
    There exists some uniform constant $C$ such that, if there exists an $(r,c)$-triangle-regular abelian Cayley graph, then $c = \binom{r}{2} - \frac{rx}{2} + y$ for some $x$ and for some $y \in \left\{ -Cx^2,\dots, Cx^2\right\}$.
\end{restatable}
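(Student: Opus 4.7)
The plan is to express the triangle count as an additive-combinatorial statistic of the connection set, then invoke a 99\%-regime stability result for sets nearly closed under subtraction. Let the Cayley graph be $\mathrm{Cay}(G, S)$ with $G$ abelian, $S = -S$, $0 \notin S$, and $|S| = r$. Setting $T := S \cup \{0\}$, a direct count of ordered pairs of distinct neighbors of the identity whose difference lies in $S$ yields
\[
c = \binom{r}{2} - \frac{|B|}{2}, \qquad |B| := |\{(t_1,t_2) \in T^2 : t_1 - t_2 \notin T\}|.
\]
The involution $(t_1, t_2) \mapsto (-t_2, -t_1)$ acts on $B$ with fixed-point set $\{(t, -t) : t \in T,\, 2t \notin T\}$, which is itself freely paired by $t \mapsto -t$ (any $t$ with $t = -t$ has $2t = 0 \in T$, so is excluded). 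Hence $|B|$ is always even, and the goal reduces to producing integers $x, y$ with $|B| = rx - 2y$, $rx$ even, and $|y| \leq Cx^2$.

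We consider two regimes. In the \emph{trivial regime} $|B| \geq K r^{3/2}$ (for a large constant $K$), we take $x$ to be the nearest integer of appropriate parity to $|B|/r$; then $|rx - |B|| \leq 2r$ and $x = \Omega(\sqrt{r})$, so $|(rx - |B|)/2| \leq r \leq C x^2$ for $C$ large enough. In the complementary \emph{structured regime} $|B| < K r^{3/2}$, we have $\mathrm{add}(T) := |T|^2 - |B| \geq (1 - O(r^{-1/2})) |T|^2$, placing $T$ in the 99\% regime of the approximate-subgroup lemma from the abstract, which produces a subgroup $H \leq G$ with $|T \triangle H| = O(\sqrt{r})$.

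Writing $A := T \setminus H$, $M := H \setminus T$, $a := |A|$, $\mu := |M|$, and $x := 2a + \mu$, we compute $|B|$ by partitioning $T^2$ into the four blocks $(T \cap H)^2$, $A \times (T \cap H)$, $(T \cap H) \times A$, and $A^2$. Their contributions to $|B|$ are respectively:
\begin{itemize}
\item $\mu r + O(x^2)$: for each $m \in M$, the pairs with $t_1 - t_2 = m$ number $|H| - |M \cup (M-m)| = r - O(x)$;
\item $a r + O(x^2)$ for each mixed block, since for a pair in $A \times (T \cap H)$ we have $t_1 - t_2 \notin H$, so the pair is ``good'' only when $t_1 - t_2 \in A$, which happens $O(a^2)$ times in total;
\item $O(a^2) = O(x^2)$ from $A^2$ trivially.
\end{itemize}
Summing, $|B| = (2a+\mu) r + O(x^2) = rx + O(x^2)$. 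A brief parity verification (adjusting $x$ by $\pm 1$ if needed; the resulting change in $|rx - |B||$ is absorbed by $Cx^2$ whenever $x = \Omega(\sqrt{r})$, and the finitely many small-$x$ exceptions admit an alternative choice of close subgroup) ensures $y := (rx - |B|)/2$ is an integer satisfying $|y| \leq C x^2$.

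The principal obstacle is the 99\%-regime approximate-subgroup lemma itself, which constitutes the technical core referenced in the abstract. Given that ingredient, the main subtlety in the remaining calculation is the \emph{asymmetry} between missing and extra elements: each $m \in M$ contributes $\sim r$ bad pairs via a single block of $T^2$, whereas each $a \in A$ contributes $\sim 2r$ via two blocks. This asymmetry is exactly what produces the leading coefficient $2a + \mu$ rather than the naive $|T \triangle H| = a + \mu$.
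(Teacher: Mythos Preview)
Your approach is correct and essentially identical to the paper's: both invoke \cref{lem:subgroup-testable} to find a nearby subgroup $H$ and then bound the additive-failure count by partitioning along $H$, with your intrinsic choice $x = 2a + \mu$ matching the paper's leading coefficient $3\alpha + \beta$ under the dictionary $a = \alpha$, $\mu = \alpha + \beta - 1$ (once $0 \in H$), and your trivial/structured regime split merely reorganizing the paper's uniform $\varepsilon = 1 - 2c/r^2$ parametrization. Your closing parity remark is actually more careful than the paper, which implicitly treats $y$ as ranging over a real interval and does not address integrality at all.
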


In particular, this means that the interval is spotty, and that the smallest non-achievable value of $c$ is $\binom{r}{2} - \Omega(r^{3/2})$. The key technical ingredient in the proof of \cref{thm:rc-cayley-nonexistence} is the following lemma, demonstrating that subsets of an abelian group which are ``close'' to being closed under addition are also ``close'' to some subgroup:

\begin{restatable}{lemma}{subgrouptestable}\label{lem:subgroup-testable}
    There exists some uniform constant $K$ such that, for any finite abelian group $G$, any subset $S \subseteq G$, and any $\varepsilon > 0$, if $\Pr_{a, b \in S}[a + b \in S] \ge 1 - \varepsilon$, then there exists some subgroup $H \leq G$ such that $|H| \leq (1 + K\varepsilon)|S|$, and $|H \cap S| \geq (1 - K\varepsilon)|S|$. 
\end{restatable}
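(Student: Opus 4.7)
I would approach \cref{lem:subgroup-testable} via Fourier analysis on the finite abelian group $G$. Set $n := |S|$ and $r(c) := |\{(a,b)\in S^2 : a+b=c\}|$, so that the hypothesis reads $\sum_{c\in S} r(c) \geq (1-\varepsilon)n^2$. Cauchy--Schwarz applied to this sum gives
\[
    \Big(\sum_{c \in S} r(c)\Big)^2 \;\leq\; |S| \sum_c r(c)^2 \;=\; n\, E(S),
\]
where $E(S) := \sum_c r(c)^2$ is the additive energy, so $E(S) \geq (1-2\varepsilon)n^3$. Passing to characters of $G$, Plancherel gives $\sum_\chi |\widehat{1_S}(\chi)|^2 = n|G|$ and Parseval gives $E(S) = \frac{1}{|G|}\sum_\chi |\widehat{1_S}(\chi)|^4$, so
\[
    \max_\chi |\widehat{1_S}(\chi)|^2 \;\geq\; \frac{\sum_\chi |\widehat{1_S}(\chi)|^4}{\sum_\chi |\widehat{1_S}(\chi)|^2} \;=\; \frac{E(S)}{n} \;\geq\; (1-2\varepsilon)n^2,
\]
and some character $\chi^* \in \widehat{G}$ satisfies $|\widehat{1_S}(\chi^*)| \geq (1-\varepsilon)n$.

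Next, I would argue that such a large Fourier coefficient, together with the hypothesis, forces almost all of $S$ to lie inside the subgroup $\ker \chi^*$. Since $\widehat{1_S}(\chi^*) = \sum_{x \in S}\overline{\chi^*(x)}$ is a sum of $n$ unit complex numbers whose modulus is at least $(1-\varepsilon)n$, a standard concentration estimate (rewriting $n^2 - |\widehat{1_S}(\chi^*)|^2 = \sum_{x,y\in S}(1 - \mathrm{Re}\,\chi^*(y-x))$) shows that a $(1-O(\varepsilon))$-fraction of $S$ lies in a single coset $\alpha + \ker\chi^*$. The hypothesis then rules out $\alpha \notin \ker\chi^*$: for $a, b$ in the popular coset, $a+b \in 2\alpha + \ker\chi^*$, and if $\alpha \notin \ker\chi^*$ this is a different coset from $\alpha + \ker\chi^*$; almost every sum $a+b$ would then land outside $S$, contradicting the hypothesis. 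Hence $|S \cap \ker\chi^*| \geq (1 - O(\varepsilon))|S|$.

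To produce the desired subgroup $H$ in a single pass, rather than iterating inside $\ker\chi^*$ and incurring a $\log(|G|/|S|)$ loss, I would work in parallel with the entire large spectrum
\[
    \Lambda := \{\chi \in \widehat G : |\widehat{1_S}(\chi)| \geq (1-c\varepsilon)\,n\}
\]
for a suitable absolute constant $c$, and take $H := \Lambda^\perp \leq G$. Applying the previous step to each $\chi \in \Lambda$, a Parseval-type identity $\langle 1_S, 1_H\rangle = \frac{1}{|\Lambda|}\sum_{\chi \in \Lambda}\widehat{1_S}(\chi)$ combined with the fact that each such $\widehat{1_S}(\chi)$ has real part close to $n$ yields $|H \cap S| \geq (1 - K\varepsilon)|S|$. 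Meanwhile, Parseval combined with the threshold defining $\Lambda$ pins down $|\Lambda|$ to within a factor $(1 \pm O(\varepsilon))$ of $|G|/|S|$, giving $|H| = |G|/|\Lambda| \leq (1 + K\varepsilon)|S|$. The crux and main obstacle is to verify that $\Lambda$ is itself a subgroup of $\widehat G$, i.e., that $\chi_1, \chi_2 \in \Lambda$ implies $\chi_1\chi_2 \in \Lambda$ with only linear-in-$\varepsilon$ loss; this requires sharpening the naive union bound $|S \cap \ker\chi_1 \cap \ker\chi_2| \geq |S| - |S\setminus\ker\chi_1| - |S\setminus\ker\chi_2|$ carefully using the Fourier identities, and is the technical heart of the proof.
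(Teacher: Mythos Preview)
Your outline has the right opening move (extract a character with $|\widehat{1_S}(\chi^*)|\geq(1-\varepsilon)n$), but the two steps you treat as routine are precisely where the difficulty lies, and the step you flag as the crux is actually bypassed in the paper's argument.

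\textbf{Coset concentration with linear $\varepsilon$.} The identity $n^2-|\widehat{1_S}(\chi^*)|^2=\sum_{x,y}(1-\Re\chi^*(y-x))$ only gives that the \emph{average} of $1-\cos(\theta_y-\theta_x)$ over pairs is $O(\varepsilon)$. Since $1-\cos\theta\asymp\theta^2$, this controls the variance of the angles, and by itself yields only that a $(1-O(\sqrt\varepsilon))$-fraction of $S$ lies near a single angle. Indeed, if half of $S$ sits at angle $+\sqrt\varepsilon$ and half at $-\sqrt\varepsilon$, the Fourier coefficient is still $\geq(1-O(\varepsilon))n$, yet no single coset of $\ker\chi^*$ contains more than half of $S$ when $\chi^*$ has large order. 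Getting linear $\varepsilon$ here requires repeatedly invoking the additive-closure hypothesis, not just the size of one Fourier coefficient. The paper does this via an iterative ``angle-halving'' argument: if too many elements have $\theta_x\in(\theta/2,\theta]$, their pairwise sums land in $(\theta,2\theta]$, a region where $S$ is sparse, forcing many sums out of $S$ and contradicting the hypothesis. Iterating drives the angle to zero for all ``good'' elements (those with $\Pr_b[a+b\in S]>1/2$).

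\textbf{Bounding $|H|$ from above.} Your Parseval sketch goes the wrong way. From $\sum_\chi|\widehat{1_S}(\chi)|^2=n|G|$ and $|\widehat{1_S}(\chi)|\geq(1-c\varepsilon)n$ on $\Lambda$, you get an \emph{upper} bound $|\Lambda|\leq(1+O(\varepsilon))|G|/n$, hence only a \emph{lower} bound $|H|=|G|/|\Lambda|\geq(1-O(\varepsilon))n$. The upper bound $|H|\leq(1+K\varepsilon)n$ needs a lower bound on $|\Lambda|$, which does not fall out of Parseval plus the threshold. The paper sidesteps this entirely: rather than constructing $H$ and then bounding its size, it takes $G'$ to be the \emph{minimal} subgroup containing all ``super-good'' elements (those with $\Pr_b[a+b\in S]\geq 2/3$), and argues by contradiction. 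If $|G'|>(1+K\varepsilon)|S|$, then $|S\cap G'|<(1-\Theta(\varepsilon))|G'|$, so the angle-halving lemma applies inside $G'$ to produce a proper subgroup of $G'$ still containing every super-good element --- contradicting minimality.

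So the paper never needs $\Lambda$ to be a subgroup, nor a direct size bound on $H$; the two ingredients are the iterative correction to push all good elements into $\ker\chi^*$, and the minimality trick to close the argument without iteration over subgroups.
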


The restriction to abelian Cayley graphs is important, as our proof of \cref{lem:subgroup-testable} relies on Fourier analysis --- however, it seems conceivable that \cref{lem:subgroup-testable}, and hence \cref{thm:rc-cayley-nonexistence}, hold for arbitrary groups. In fact, one might even suspect the following, implying that these answers extend to arbitrary graphs:

\begin{conjecture}\label{conj:abelian-Cayley}
If there exists an $(r, c)$-triangle-regular graph, then there exists
an $(r, c)$-triangle-regular abelian Cayley graph.
\end{conjecture}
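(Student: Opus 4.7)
The plan is to close the gap between the parameters achievable by general $(r,c)$-triangle-regular graphs and those achievable by abelian Cayley graphs. In light of \cref{thm:cayley-construction,thm:rc-cayley-nonexistence}, which together nearly characterize the achievable $(r,c)$ for abelian Cayley graphs, the conjecture reduces to two complementary strengthenings: (i) extend the existing abelian Cayley constructions to match the entire existence regime $0 \leq c \leq \binom{r}{2} - 5r^{3/2}$ of \cref{thm:caro-et-al-rc}, and (ii) extend the general nonexistence result of \cref{thm:rc-nonexistence} to match the Cayley nonexistence of \cref{thm:rc-cayley-nonexistence}, showing that for general graphs as well, achievable large-$c$ values must be of the structured form $\binom{r}{2} - rx/2 + y$ with $|y| = O(x^2)$.

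For (i), I would revisit the constructions underlying \cref{thm:caro-et-al-rc} and seek abelian Cayley analogues. Many natural $(r,c)$-triangle-regular constructions --- blow-ups, complete multipartite graphs, and arithmetic constructions --- admit Cayley presentations over cyclic groups or products thereof, so a careful pass through the existing proof plausibly suffices to cover most of the regime. The delicate boundary region near $c \approx \binom{r}{2} - 5r^{3/2}$ will require more care, but constructions in the spirit of \cref{thm:cayley-construction}, with parameters tuned to extract smaller values of $c$, seem like a promising template.

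Direction (ii) is the main obstacle. The proof of \cref{thm:rc-cayley-nonexistence} crucially uses \cref{lem:subgroup-testable}, whose Fourier-analytic proof is tied to abelian group structure. To handle general graphs one would want a combinatorial structural statement: any $(r,c)$-triangle-regular graph with $c$ in the large-$c$ regime must locally resemble an abelian Cayley graph, in the sense that the ``defect graph'' of missing edges in each neighborhood fits an approximate-subgroup pattern. A natural intermediate goal is to extend \cref{lem:subgroup-testable} itself to nonabelian finite groups --- perhaps via a representation-theoretic refinement of the Fourier argument --- which would already yield the analogous conjecture for Cayley graphs of all finite groups. Going all the way to arbitrary regular graphs likely demands genuinely new structural tools, possibly drawing on graph limit theory or a structural classification of triangle-regular graphs in the highly constrained regime, and this is where the deepest difficulties are to be expected.
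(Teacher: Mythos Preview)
The statement you are addressing is labeled as a \emph{conjecture} in the paper, and the paper does not provide a proof --- it explicitly leaves the question open, offering only the heuristic that triangle-regularity seems unlikely to require exploiting asymmetry. There is therefore no proof in the paper to compare your proposal against.

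Your proposal is not a proof but a research outline, and you yourself concede that direction (ii) demands ``genuinely new structural tools'' and that ``this is where the deepest difficulties are to be expected.'' That assessment is accurate: the paper's own discussion following the proof of \cref{lem:subgroup-testable} notes that even the nonabelian-group version of that lemma is open, and that the quantitative bounds in known almost-all Balog--Szemer\'edi--Gowers results are far from what would be needed.

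There is also a logical gap in your decomposition. Even if both (i) and (ii) were carried out exactly as you state them, they would not combine to prove the conjecture. The Cayley construction of \cref{thm:cayley-construction} produces graphs only for $y$ in the window $\frac{x^2}{8} + 3x \leq y \leq \frac{x^2}{4} - 4x^{3/2}$, whereas the Cayley nonexistence of \cref{thm:rc-cayley-nonexistence} rules out only $|y| > Cx^2$. Transferring that same nonexistence bound to general graphs, as your (ii) proposes, still leaves a range of $y$ values --- for instance negative $y$, or $y$ between $x^2/4$ and $Cx^2$ --- that could in principle be achievable by some general $(r,c)$-triangle-regular graph but are not covered by any known Cayley construction. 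For your strategy to close, you would need the Cayley existence range and the general-graph nonexistence range to meet exactly, which is strictly stronger than either (i) or (ii) as written.
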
  

The intuition behind such a conjecture is that it would be surprising for triangle-regularity to be possible only by exploiting some lack of symmetry, so one might expect that any triangle-regular graph can be made highly symmetric.\\

In addition to this study of $(r,c)$-triangle-regular graphs, we next consider a generalization to graphs with multiple types of edges.

\begin{restatable}{defin}{multicolor}\label{def:multicolor}
Fix some $k \in \N$ and $\vec{r}, \vec{c} \in \N^k$. A $t$-edge-colored graph $G$ is \defn{$(\vec{r}, \vec{c})$-triangle-regular} if every vertex is incident to exactly $\vec{r}[i]$ edges of color $i$, and the neighborhood of each vertex induces exactly $\vec{c}[i]$ edges of color $i$, for all $i \in \{1,\dots, t\}$.
\end{restatable}

In this setting, it is much harder to characterize which $\vec{r},\vec{c}$ admit a $(\vec{r}, \vec{c})$-triangle-regular graph.
However, in \cref{sec:lp}, we do present a simple linear program whose feasibility is a necessary condition for the existence of such a graph.
We also show that, if there is a feasible solution to that linear program, then there exists a $\left(\vec{r'}, \vec{c'}\right)$-triangle-regular graph, where $\vec{r'}$ and $\vec{c'}$ differ by at most a constant factor in each coordinate from $\vec{r}$ and $\vec{c}$, respectively.
This can be thought of as providing an efficient algorithm for distinguishing between $\vec{r}, \vec{c}$ that admit a $(\vec{r}, \vec{c})$-triangle-regular graph, and $\vec{r}, \vec{c}$ that are far from any pair of vectors admitting such a graph. \\

Finally, in \cref{sec:hard}, on the algorithmic \emph{infeasibility} side, we show that given an uncolored graph $G$, deciding whether there exists an $(\vec{r}, \vec{c})$-triangle-regular edge-coloring is $\mathsf{NP}$-hard in general.

\begin{restatable}{theorem}{rchard}\label{thm:rchard}
    Given a graph $G$, and vectors $\vec{r}, \vec{c} \in \N^2$, it is $\NP$-complete to determine whether $G$ can be $2$-colored to yield an $(\vec{r}, \vec{c})$-triangle regular graph.
\end{restatable}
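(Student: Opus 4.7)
The plan is to establish both directions of the $\NP$-completeness claim separately. Membership in $\NP$ is immediate: a proposed $2$-edge-coloring of $G$ is a certificate of polynomial size, and verifying that every vertex has the prescribed colored degrees $\vec{r}$ and prescribed colored-neighborhood-edge counts $\vec{c}$ takes time polynomial in $|V(G)|$ by iterating over all vertices and, for each vertex, scanning its neighborhood. The substantive part of the theorem is therefore the $\NP$-hardness direction.

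For hardness, I would set up a polynomial-time reduction from a standard $\NP$-hard constraint satisfaction problem such as $3$-SAT (or a rigid variant like Positive $1$-in-$3$-SAT) via a gadget construction. The reason to expect hardness is that the colored-degree constraint alone reduces to an $f$-factor problem and lies in $\P$, whereas the neighborhood-edge-count constraint couples the color choices on pairs of edges meeting at a common vertex, and this quadratic interaction should be expressive enough to encode Boolean constraints. Concretely, I would build a \emph{variable gadget} for each variable, designed so that its valid local $2$-colorings split into exactly two families corresponding to the truth values, and a \emph{clause gadget} for each clause, attached to the relevant variable gadgets by ``literal edges'' whose colors are inherited from the variable choice. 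The clause gadget would be chosen so that a valid coloring exists exactly when the clause is satisfied by the incoming literal colors.

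The main technical obstacle is that the problem demands that \emph{every} vertex of the constructed graph satisfy the \emph{same} parameters $\vec{r}$ and $\vec{c}$, while the raw gadget vertices will in general have different local degree and triangle-count profiles. I therefore expect most of the work to lie in padding: for each gadget vertex, I would attach auxiliary ``filler'' structure (for instance, copies of a highly symmetric base graph or dense bipartite padding) that inflates its degree and its neighborhood edge counts of each color to uniform target values. The delicate point is that the filler must be essentially rigid, so that gadgets cannot ``cheat'' by borrowing color-$1$ or color-$2$ edges from the padding to simulate assignments they were not meant to encode; I would enforce this by designing the filler so that its internal coloring is forced up to trivial symmetries independent of the gadget choices, leaving the only real degrees of freedom in the variable gadgets.

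Finally, I would verify correctness in both directions. Given a satisfying assignment, produce a coloring by coloring each variable gadget according to its truth value, each clause gadget according to the canonical valid coloring for the satisfied clause, and the filler canonically; then check that $\vec{r}$ and $\vec{c}$ are attained at every vertex. Conversely, given an $(\vec{r},\vec{c})$-triangle-regular $2$-coloring of the constructed graph, read off the truth values from the variable gadgets and argue, using the clause gadget's internal constraints together with the rigidity of the filler, that every clause is satisfied. Since the construction has size polynomial in the input formula, combining this with $\NP$ membership yields $\NP$-completeness.
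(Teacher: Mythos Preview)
Your proposal correctly identifies the overall architecture --- a gadget reduction from a hard SAT variant, with variable gadgets admitting two ``truth value'' settings and clause gadgets checking the constraint --- and this is indeed what the paper does (reducing from Positive 1-in-3-SAT-E4). At that level of abstraction the approaches agree.

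However, what you have written is a strategy outline, not a proof: no concrete gadgets are given, no target $(\vec{r},\vec{c})$ is fixed, and none of the claimed rigidity properties are verified. The entire difficulty of such a hardness result lies precisely in those details, and your proposal does not engage with them. In particular, your mechanism for achieving global uniformity is to attach ``filler'' structure to each gadget vertex to inflate its colored degrees and neighborhood counts to common targets, while keeping the filler's coloring rigid. This is the step most likely to fail without a concrete construction: rigid filler that simultaneously adjusts both $\vec{r}$ and $\vec{c}$ by independently tunable amounts, without creating unintended triangles with the gadget interior and without introducing new coloring freedom, is far from automatic. The paper sidesteps this entirely by working at a specific small parameter vector $\vec{r}=(1,6)$, $\vec{c}=(1,2)$ and designing the variable and clause gadgets so that every internal vertex already meets these exact counts; the residual ``dangling'' edges are then paired off across $32$ disjoint copies of the construction so as to create no new triangles. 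There is no separate padding step. Until you exhibit either a concrete rigid filler or concrete gadgets that are already uniform, the hardness direction remains unproved.
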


A related notion to $(\vec{r}, \vec{c})$-triangle regular graphs, introduced by Caro et al. \cite{flip1}, is that of a \defn{flip coloring} of a graph: an edge-coloring with a sequence of colors such that the degrees at each vertex are strictly increasing along the sequence of colors, but the prevalence of edges in the closed neighborhood of each vertex are strictly decreasing along the sequence of colors. We are able to use the machinery we develop to address several open questions posed in their work: in \cref{sec:lp}, we improve several of their bounds on ``flip sequences,'' and in \cref{sec:hard} we show that a variant of our $(\vec{r}, \vec{c})$-triangle-regular hardness proof gives $\NP$-hardness for finding flip colorings.


\subsection{Related work}

The term $(r,c)$-triangle-regular graph was first introduced by Nair and Vijayakumar under the name ``strongly vertex triangle regular graph''~\cite{nair1994triangles, nair1996strongly}.
They have also been referred to as ``$(r,c)$-constant graphs'' in a recent sequence of papers that aims to characterize which $r$ and $c$ are achievable, with particular interest in the related problem of flip coloring~\cite{flip1,rc-constant,flip2,mifsud2024local}.
The results and techniques of our paper build on the papers \cite{flip1,rc-constant,flip2,mifsud2024local}; however, we use the term ``$(r,c)$-triangle-regular'' as a compromise between these notations.\\

We show in \cref{sec:cayley} that the existence of $(r,c)$-triangle-regular abelian Cayley graphs can be viewed as an additive combinatorics question: we want to know for which values of $c$ there exist an abelian group $G$ and a symmetric subset $S$, with $|S| = r$ and $S$ containing exactly $2c$ additive triples (i.e., ordered triples $x, y, z \in S$ such that $x+y = z$).
There is a line of work considering for particular abelian groups the possible counts of additive triples in subsets of a given size~\cite{samotij2016number,huczynska2024additive}.
Our problem differs in that we are not concerned with any particular abelian group, but instead aim to characterize which $c$ are possible for a size-$r$ subset of \emph{some} abelian group.\\

Our \cref{lem:subgroup-testable}, about the degree to which nearly-additively-closed subsets of an abelian group correspond to actual subgroups, belongs to an extensive body of existing work on ``approximate groups.'' There are a number of different ways to quantify what it means for a subset of a group to be ``approximately'' closed under the group operation --- for instance, one can specify that the set has small doubling or tripling, or large energy. Most existing work considers the ``1\% regime,'' i.e., where these values are off by a constant factor from what they would be in a true subgroup, in which case most natural notions turn out to be roughly equivalent. \cref{lem:subgroup-testable} belongs instead to the ``99\% regime,'' in that we require this constant factor to be very close to $1$. In this setting, known results imply qualitative analogues of \cref{lem:subgroup-testable} --- it was already known that one can find a subgroup with $|H| \leq (1 + K \sqrt{\eps})|S|$ and $|H \cap S| \geq (1-K\sqrt{\eps})|S|$~\cite{fournier1977sharpness, sanders2012approximate} --- but as far as we are aware a linear dependence on $\eps$ is novel. Some further discussion of related results is given in \cref{sec:cayley}; for surveys on approximate subgroups, the reader is referred to introductions by Breuillard or Tointon~\cite{breuillard2014brief, tointon2019introduction}.\\

A complementary notion to that of a triangle-regular graph is a triangle-\emph{distinct} graph --- that is, a graph where \emph{no} two vertices are involved in the same number of triangles. Erd\H{o}s and Trotter initiated the study of such graphs --- bounds and constructions for triangle-distinct graphs have been investigated recently as well~\cite{berikkyzy2024triangle, stevanovic2024regular}.

\section{Preliminaries}

Our results in \cref{sec:cayley} will make use of Fourier analysis over abelian groups; readers unfamiliar with terminology are referred to~\cite{o2014analysis}.\\

We generally use arrows to denote vector quantities in order to distinguish from scalars.
In particular, ``$(\vec{r}, \vec{c})$-triangle-regular'' denotes the multicolored generalization, where $\vec{r}$ and $\vec{c}$ are both length-$t$ vectors, while the standard notion is denoted ``$(r,c)$-triangle-regular''.
For a vector $\vec{v}$, we write $\vec{v}[i]$ to denote the $i$th entry.\\

We denote the neighborhood of a vertex by $N(v)$. We also occasionally use $N[v]$ to denote the ``closed neighborhood'' of a vertex; that is, $N[v] = N(v) \cup \{v\}$.\\

The \defn{Cartesian product} between graphs $G$ and $H$, denoted $G \square H$, has vertex set $V(G) \times V(H)$, and edge set $\left\{ \big((u,v), (u',v) \big) \ | \ (u,u') \in E(G) \right\} \cup \left\{ \big((u,v), (u,v') \big) \ | \ (v,v') \in E(H) \right\}$.
When $G$ and $H$ are edge-colored, let the color of an edge in their Cartesian product be the same as the color of the edge it corresponds to in $G$ or $H$.
Observe that, if $G$ is an $(\vec{r}, \vec{c})$-triangle-regular graph, and $H$ is an $\left(\vec{r'}, \vec{c'}\right)$-triangle-regular graph, then $G \square H$ is an $\left(\vec{r} + \vec{r'}, \vec{c} + \vec{c'}\right)$-triangle-regular graph; this fact is used extensively in both our constructions and those of Caro et al.~\cite{flip1}. \\

\section{Values of $c$ for which there does not exist an $(r,c)$-triangle-regular graph}\label{sec:zoe-upperbound}

First, we extend the interval of values for which
there does not exist an $(r, c)$-triangle-regular graph given in \cref{thm:caro-et-al-rc}.

\rcnonexistence*

\begin{proof}

Let $k\leq (r-2)/2$ be a positive integer. Suppose for the sake of
contradiction that there exists an $(r, c)$-triangle-regular graph
with $c = {r\choose 2}-k$.\\

Let $G = (V, E)$ be such an $(r, c)$-triangle-regular graph, and let $v\in V$
be a vertex. Let $R(v)$ be the set of vertices in $N[v]$ whose
neighborhood is contained in $N[v]$; let $S(v) = N[v]\backslash R(v)$
(that is, $S(v)$ is the set of vertices in $N[v]$ that have a neighbor
outside of $N[v]$); and let $T(v)\subseteq V\backslash N[v]$ be the
set of vertices that are not in $N[v]$ but have a neighbor in
$N[v]$. We consider three cases. \\

\paragraph{Case 1} There is a vertex $w\in S(v)$ such that $w$ has at least
$(k+1)/3$ neighbors in $T(v)$. Consider $N(w)$; we have that
$R(v)\subseteq N(w)$ as $w\in N[v]$. Since there are $r+1$ vertices
in $N[v]$ and the $k$ edges missing from the would-be $(r+1)$-clique
touch at most $2k$ vertices, $|R(v)|\geq r+1-2k$. As there are no
edges between vertices in $T(v)$ and vertices in $R(v)$, this
implies that there are at most ${r\choose 2}-(r+1-2k)(k+1)/3\leq
{r\choose 2}-k-1$ edges between vertices in $N(w)$, which
contradicts that $G$ is $(r, c)$-triangle-regular. \\

\paragraph{Case 2}
There is a vertex $x\in T(v)$ such that $x$ has at least $2(k+1)/3$
neighbors in $S(v)$. We have that every vertex $w\in
N(x)\cap S(v)$ is neighbors with every vertex in $R(v)$ and that
$R(v)\cap N(x) = \varnothing$ as $x\notin N[v]$; since $R(v)\geq
r+1-2k$, this implies that $w$ is neighbors with at most $2k-1$
vertices in $N(x)$. This in turn implies that there are at most
\[{r\choose 2}-\frac{2(k+1)}{3}\cdot\frac{r+1-2k}{2}\leq {r\choose 2}-k-1
\]
edges between vertices in $N(x)$, which contradicts that $G$ is $(r,
c)$-triangle-regular. \\

\paragraph{Case 3}
For the third case, assume that we are neither in case 1 nor 2. Let
$m$ be the size of $S(v)$, and note that $R(v) = r+1-m$. Let $x\in
T(v)$, and let $w$ be a neighbor of $x$ in $S(v)$. Since we are not in
case 2, we know that $x$ is not neighbors with at least $m-2(k+1)/3+1$
vertices in $S(v)$. Let us call this set of vertices $A$. Out of the
vertices in $A$, at least $m-2(k+1)/3+1-(k+1)/3+1 = m-k+1$ are
neighbors of $w$, since we are not in case 1 and thus $w$ has at most
$(k+1)/3-1$ neighbors not in $N[v]\supseteq A$. Consider $N(w)$. As
there are no edges between $x$ and vertices in $R(v)\cup A$ and
$R(v)\cap A = \varnothing$, this implies that there are at most
${r\choose 2}-(r+1-m+m-k+1)\leq {r\choose 2}-k-1$ edges between
vertices in $N(w)$, which contradicts that $G$ is $(r,
c)$-triangle-regular. \\
  
We conclude that there does not exist an $(r, c)$-triangle-regular
graph for ${r\choose 2}-\lfloor (r-2)/2\rfloor\leq c\leq {r\choose
  2}-1$.

\end{proof}

\section{Possible values of $c$ in abelian Cayley graphs}\label{sec:cayley}

We now know that there exists an $(r,c)$-triangle-regular graph for all $c$ in $\{1, \dots, \binom{r}{2} - 5r^{3/2}\}$, and there does not exist an $(r,c)$-triangle-regular graph for any $c$ in $\{\binom{r}{2} - \lfloor\frac{r-2}{2}\rfloor, \dots, \binom{r}{2} - 1\}$. However, it remains unclear which $c$ are achievable in $\{\binom{r}{2} - 5 r^{3/2}, \dots, \binom{r}{2} - \lfloor\frac{r-2}{2}\rfloor\}$. 
One might wonder whether there is some threshold such that all smaller values can be made and all larger values cannot, or whether the interval has gaps.\\

In this section, we answer this question when the $(r,c)$-triangle-regular graph is the Cayley graph of a finite abelian group. Note that the previous constructions by Caro et al. for $c \in \{1, \dots, \binom{r}{2} - 5r^{3/2}\}$ are Cartesian products of cliques, and hence abelian Cayley graphs \cite{flip1}. It seems natural for $(r,c)$-triangle-regular graphs, when they exist, to be highly symmetric, so we conjecture that, whenever there exists an $(r,c)$-triangle-regular graph, there also exists an $(r,c)$-triangle-regular abelian Cayley graph, in which case the results of this section would be close to a complete picture,

\subsection{Construction of $(r,c)$-triangle-regular abelian Cayley graphs}

We show an explicit construction of $(r,c)$-triangle-regular graphs, showing that there are intervals of achievable values of $c$ close to every multiple of $\frac{r}{2}$. 

\cayleyconstruction*

\begin{proof}
    The basic idea is to take the Cartesian product of a clique (or something very close to a clique) $G_1$ with a smaller, less dense but more fine-tuned Cayley graph $G_2$.
    To do so, we need to split into a few cases due to parity constraints.

    \paragraph{Case 1} $x$ is even. 
    In this case, let $G_1$ be a clique on $r - x/2 + 1$ vertices, and let $G_2$ be some degree-$(x/2)$ abelian Cayley graph.
    Letting $k$ be the number of edges in each open neighborhood of $G_2$, the Cartesian product of $G_1$ and $G_2$ is an $r$-regular abelian Cayley graph with
    \[\binom{r-x/2}{2} + k 
    = \binom{r}{2} - \frac{rx}{2} + \frac{x^2+2x}{8} + k\]
    edges in each open neighborhood. By \cref{thm:caro-et-al-rc}, there exists a choice of $G_2$ achieving any value of $k$ in the interval $\left\{0, \dots, \floor{\binom{x/2}{2} - 5\left(x/2\right)^{3/2}}\right\}$.
    Thus, every $y$ in the desired range is achievable.\\

    \paragraph{Case 2} $2r - x \equiv 3 \text{ mod 4}$. To construct $G_1$, take the underlying group to be $A = \Z / \big((2r - x + 5)/2\big)\mathbb{Z}$, with generating set $S = A \setminus \{0, (2r - x + 5)/4\}$.
    Note that $A$ is symmetric because $(2r - x + 5)/4 \equiv -(2r - x + 5)/4 \text{ mod } (2r - x + 5)/2.$ 
    The resulting Cayley graph can be described as a clique on $(2r - x + 5)/2$ vertices, with the edges of a perfect matching removed. This is an $\left(r - \frac{x-1}{2} \right)$-regular graph, with 
    \[\binom{r - \frac{x-1}{2}}{2} - \frac{2r - x + 1}{4} = 
    \binom{r}{2} - \frac{rx}{2} + \frac{x^2-2x+1}{8}\]     
    edges in each open neighborhood. 
    Again, by \cref{thm:caro-et-al-rc}, for any $k \in \left\{0, \dots, \floor{\binom{(x-1)/2}{2} - 5\left(\frac{x-1}{2}\right)^{3/2}}\right\}$, there exists a $\left(\frac{x-1}{2}, k\right)$-triangle-regular abelian Cayley graph $G_2$.
    Taking the Cartesian product of such $G_2$ with $G_1$ yields the entire desired range.\\

    \paragraph{Case 3} $2r - x \equiv 1 \text{ mod 4}$. In this case, let $A = \Z/\big((2r-x+11)/2\big)$, and $S = A \setminus \{0, (2r-x+7)/4, (2r-x+11)/4, (2r-x+15)/4\}$.
    Observe that $S$ is symmetric. Note that the statement of the theorem is trivially satisfied when $2r - x = 1$, so we can assume $|A| > 6$, which for case $\{(2r-x+7)/4, (2r-x+11)/4, (2r-x+15)\}$ is sum-free. 
    So, the Cayley graph generated by $S$ looks like a clique on $(2r-x+11)/2$ vertices, with the edges of a triangle-free 3-factor removed. This is an $\left(r - \frac{x-3}{2} \right)$-regular graph, with
    \[\binom{r - \frac{x-3}{2}}{2} - \frac{3\left(2r-x-1\right)}{4} = \binom{r}{2} - \frac{rx}{2} + \frac{x^2-2x+21}{8}\] 
    edges in each open neighborhood. Once again, this lets us generate an $(r,c)$-triangle-regular graph for any $c$ as in the theorem statement by taking the Cartesian product with a $\left(\frac{x-3}{2}, k\right)$-triangle-regular abelian Cayley graph for some $k \in \left\{0, \dots, \floor{\binom{(x-3)/2}{2} - 5\left(\frac{x-3}{2}\right)^{3/2}}\right\}$.
\end{proof}

\subsection{Nonexistence of $(r,c)$-triangle-regular abelian Cayley graphs}

As we will demonstrate, \cref{thm:cayley-construction} is ``approximately'' tight (where we define approximately later). That is, we show that all values of $c$ achievable by $(r,c)$-triangle-regular abelian Cayley graphs are ``close'' to multiples of $\frac{r}{2}$, where the closeness requirement becomes more lenient the further away one gets from $\binom{r}{2}$. Specifically, we show the following result.

\cayleynonexistence*

Note that \cref{thm:rc-cayley-nonexistence} does not rule out any values of $c$ with $c < \binom{r}{2} - \frac{r^{3/2}}{C^{1/2}}$, since for $b > \left(\frac{r}{C}\right)^{1/2}$ the possible range of $y$ is greater than $\frac{r}{2}$, and so every value of $c$ can be made. However, as $c$ becomes progressively closer to ${r\choose 2}$, \cref{thm:rc-cayley-nonexistence} excludes all values outside of progressively smaller intervals around each multiple of $\frac{r}{2}$. See \cref{fig:spec-numberline} for an illustration.\\

\begin{figure}[h]
  \centering
  \includegraphics[width=0.8\linewidth]{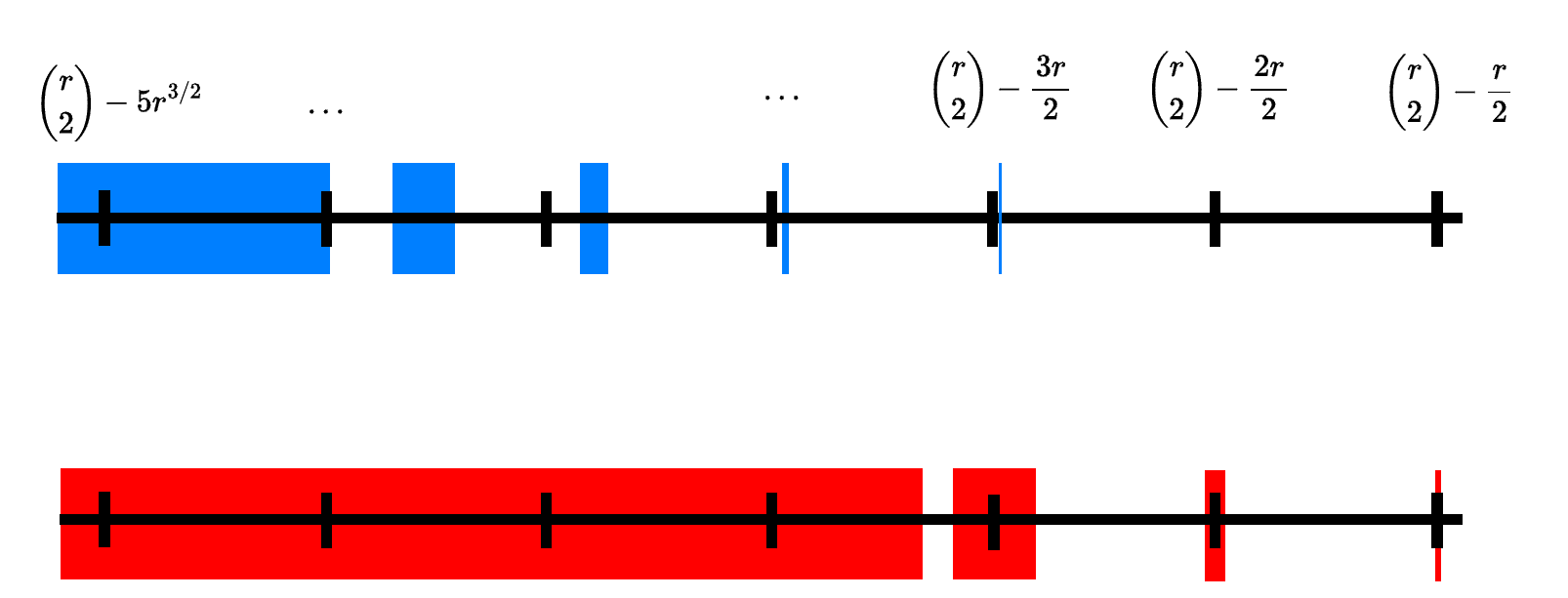}
  \caption{A sketch of the values of $c$ in between $\binom{r}{2} - 5r^{3/2}$ and $\binom{r}{2}-\frac{r}{2}$ achievable by $(r,c)$-triangle-regular abelian Cayley graphs. Top: by \cref{thm:cayley-construction}, there exists an $(r,c)$-triangle-regular abelian Cayley graph achieving every $c$ in these blue intervals. Bottom: by \cref{thm:rc-cayley-nonexistence}, for any $(r,c)$-triangle-regular abelian Cayley graph, $c$ must lie in these red intervals. In both cases, observe that the allowable values lie in intervals near multiples of $\frac{r}{2}$, with the length of those intervals quadratically increasing every step away from $\binom{r}{2}$, so that very few values are possible close to $\binom{r}{2}$, but all values are possible below $\binom{r}{2} - \Theta\left(r^{3/2}\right)$, since the intervals below there begin to overlap.}
  \label{fig:spec-numberline}
\end{figure}
The key technical tool in the proof of \cref{thm:rc-cayley-nonexistence} is a lemma, which may be of independent interest, that symmetric subsets of an abelian group that are \emph{close} to being closed under addition are correspondingly \emph{close} to some subgroup.

\subgrouptestable*

This lemma gives a strong constraint on the structure of the generating set of an $(r,c)$-triangle-regular Cayley graph for large $c$, allowing us to directly deduce \cref{thm:rc-cayley-nonexistence}.

\begin{proof}[Proof of \cref{thm:rc-cayley-nonexistence} given \cref{lem:subgroup-testable}]

Let $S\subseteq G$ be the generating set of an $(r,c)$-triangle-regular abelian Cayley graph.
The condition that every open neighborhood induces exactly $c$ edges is equivalent to the condition that there are exactly $2c$ ordered pairs $(a,b) \in S \times S$ such that $a+b \in S$. 
In other words, letting $\varepsilon = 1 - \frac{2c}{r^2}$, we have $\Pr_{a,b \in S}[a+b \in S] = 1 - \varepsilon$.\\

Now, by \cref{lem:subgroup-testable}, we know that, for some integers $\alpha, \beta \leq K\varepsilon r$, there exists a subgroup $H \subseteq G$ with  $|H \cap S| = r - \alpha$, and $|H| = r + \beta$. 
We now bound $c$ in terms of $\alpha$ and $\beta$. Note that every additive triple involving at least one element outside of $H$ must involve at least two, since $H$ is a subgroup.
So, we can upper bound the number of ordered pairs $(a,b) \in \left((S \setminus H)\times S\right) \cup \left(S \times (S \setminus H)\right)$ with $a + b \in S$ by $|S \setminus H|^2 = \alpha^2$.
The remaining ordered pairs $(a,b)\in S\times S$ with $a+b$ in $S$ have $a$, $b$, and $a+b$ all in $(S \cap H)$. \\

The total number of pairs $(a,b) \in H \times H$ with $a+b \not\in S$ is exactly $|H \setminus S| \cdot |H| = (\alpha+\beta)(r + \beta)$, since there is one choice for $b$ given any values of $a \in H$ and $a+b \in (H \setminus S)$.
The fact that $a$ and $a+b$ determine $b$ also lets us upper bound the number of pairs $(a,b) \in ((H \setminus S) \times H)$ with $a+b \not\in S$ by $|H \setminus S|^2 = (\alpha+\beta)^2$.
The same bound holds for the pairs $(a,b) \in (H \times (H\setminus S))$ with $a+b \not\in S$.
So, the total number of pairs $(a,b) \in (H\cap S) \times (H\cap S)$ with $a+b \in S$ is at least 
\[|H \cap S|^2 - \left( (\alpha + \beta)(r+ \beta)\right) = r^2 - (3\alpha + \beta)r + \alpha^2 - \alpha\beta - \beta^2  ,\]
and at most
\[|H\cap S|^2 - \left( (\alpha + \beta)(r+ \beta) - 2(\alpha+\beta)^2\right) = r^2 - (3\alpha + \beta)r + 3\alpha^2 + 3\alpha\beta + \beta^2.\]

Combining these bounds with our bound on the number of additive triples outside of $H$, we get that
\[ r^2 - (3\alpha + \beta)r + \alpha^2 - \alpha\beta - \beta^2 \leq 2c \leq r^2 - (3\alpha + \beta)r + 4\alpha^2 + 3\alpha\beta + \beta^2.\]

Thus, $c$ is within an additive $\alpha^2 + 3\alpha\beta + \beta^2 \leq 5 K^2\varepsilon^2r^2$ from a multiple of $\frac{r}{2}$. 
Letting $x$ be the integer minimizing distance from $\binom{r}{2} - \frac{rx}{2}$ to $c$, note that we have $\varepsilon \leq 1 - \frac{2\left(\binom{r}{2} - \frac{r(x-1)}{2}\right)}{r^2} \leq \frac{x+3}{r}$. 
So, $c$ is within an additive $5K^2(x+3)^2$ from $\binom{r}{2}-\frac{rx}{2}$.
Since our desired statement follows directly from \cref{thm:rc-nonexistence} when $x=0$, we can assume $x \geq 1$, in which case we have $5K^2(x+3)^2 \leq (80K^2)x^2$. 
So, we have shown that for $C = 80K^2$, there exists some integers $x$ and $y$, with $|y| \leq Cx^2$, such that $c = \binom{r}{2} - \frac{rx}{2} + y$.

\end{proof}

The remainder of this section is devoted to the proof of \cref{lem:subgroup-testable}. 
The basic idea can be thought of as two layers of a ``correction'' argument of a form common in property testing problems, where one shows that being reasonably close to some property actually implies being extremely close. 
On the outer layer, we show that we can repeatedly pass to subgroups containing most of $S$, where this ``correction'' idea prevents parameter loss in what ``most'' means.
On the inner layer, to attain that passage to a subgroup, we proceed using Fourier analysis, finding some character highly correlated with $S$ and repeatedly correcting to force most of $S$ to have image within successively smaller intervals around $1$ under that character.
We begin with this inner layer, finding a character in \cref{lem:buff-character} and then correcting most of its image to a single point in \cref{lem:pass-to-subgroup}.

\begin{lemma}\label{lem:buff-character}
  Let $G$ be an abelian group, and $S \subseteq G$ be a symmetric subset such that $|S| < (1-20000\varepsilon)|G|$, and $\Pr_{a,b \in S}[a+b \in S] \geq 1 - 10\varepsilon$.
  Then, there exists some nontrivial character $\chi: G \to \C^\times$ such that, for at least $.999|S|$ distinct values of $x \in S$, we have $\Re(\chi(x)) > 0$.
\end{lemma}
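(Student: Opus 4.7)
The plan is to exploit the additive-closure hypothesis via Fourier analysis on $G$. Setting $f = \one_S$ and $\alpha := |S|/|G|$, the goal is to locate a nontrivial character $\chi^*$ with Fourier coefficient $\hat{f}(\chi^*)$ almost as large as the trivial coefficient $\hat{f}(1) = \alpha$; the symmetry of $S$ will then translate the largeness of $\hat{f}(\chi^*)$ into the desired pointwise statement about $\Re(\chi^*(x))$.

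The first step is to turn the hypothesis $\Pr_{a,b\in S}[a+b\in S] \geq 1 - 10\varepsilon$ into a Fourier identity. Since $S = -S$, the function $f$ is even, so a standard unfolding expresses the count of additive triples $(a,b,a+b) \in S^3$ in terms of $\sum_\chi \hat{f}(\chi)^3$, with every $\hat{f}(\chi)$ real. Separating out the trivial character yields
\[\sum_{\chi \neq 1} \hat{f}(\chi)^3 \;\geq\; \alpha^2(1-\alpha-10\varepsilon).\]

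Next, I would deduce that some nontrivial Fourier coefficient is close to $\alpha$. Writing $M := \max_{\chi\neq 1}\hat{f}(\chi)$ and using the pointwise inequality $\hat{f}(\chi)^3 \leq M\,\hat{f}(\chi)^2$ together with Parseval $\sum_{\chi\neq 1}\hat{f}(\chi)^2 = \alpha - \alpha^2$, one obtains $M \geq \alpha\bigl(1 - \tfrac{10\varepsilon}{1-\alpha}\bigr)$. The density hypothesis $|S| < (1 - 20000\varepsilon)|G|$ forces $1 - \alpha > 20000\varepsilon$, so $M \geq \alpha(1 - 1/2000)$. Finally, symmetry of $S$ implies $\hat{f}(\chi^*) = \frac{1}{|G|}\sum_{x\in S} \Re(\chi^*(x))$, so the character $\chi^*$ attaining $M$ satisfies $\sum_{x\in S}\Re(\chi^*(x)) \geq |S|(1-1/2000)$. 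Since each summand is at most $1$, the number of $x \in S$ with $\Re(\chi^*(x)) \leq 0$ is at most $|S|/2000$, leaving more than $0.999|S|$ elements with $\Re(\chi^*(x)) > 0$.

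I don't expect a serious obstacle: the computation is essentially forced by Fourier inversion, and the density hypothesis is exactly tight to convert the $10\varepsilon$ slack in the third-moment lower bound into a $1/2000$ slack in the maximum nontrivial Fourier coefficient (which then becomes a $1/1000$ slack in the number of bad points via the $\pm 1$ bound on real parts). The main place to exercise care is verifying that $\hat{f}(\chi)^3 \leq M\,\hat{f}(\chi)^2$ holds for every nontrivial $\chi$, not just those with $\hat{f}(\chi) > 0$: this is fine because the $\hat{f}(\chi) \leq 0$ case makes the left side nonpositive and the right side nonnegative, so the inequality is automatic.
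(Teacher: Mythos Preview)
Your proposal is correct and follows essentially the same route as the paper: express the additive-triple count as a third moment of the (real-valued, by symmetry) Fourier coefficients, subtract the trivial contribution, bound the remaining third moment by $M$ times the Parseval second moment to extract a nontrivial character with $\hat f(\chi^*)\ge \alpha\bigl(1-\tfrac{10\varepsilon}{1-\alpha}\bigr)$, and finish with the Markov-type count on $\Re(\chi^*(x))$. The only cosmetic difference is your normalization ($\hat f(1)=\alpha$ versus the paper's $\widehat{1_S}(\mathrm{Triv})=|S|$); one small remark is that your justification of $\hat f(\chi)^3\le M\,\hat f(\chi)^2$ in the $\hat f(\chi)\le 0$ case tacitly assumes $M\ge 0$, but the inequality in fact holds unconditionally since $\hat f(\chi)\le M$ and $\hat f(\chi)^2\ge 0$.
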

\begin{proof}
  Let $1_S: G \to \{0,1\}$ be the indicator function of $S$.
  We have $\mathbb{E}_{a, b\in S}[1_S(a+b)] \geq 1 - \varepsilon$, so equivalently $\sum_{a,b \in G}[1_S(a)1_S(b)1_S(a+b)] \geq (1 - 10\varepsilon) |S|^2$.
  We could write this in terms of convolution as $1_S * 1_S * 1_S(I) \geq (1-10\varepsilon)\mu^2|S|^2$.
  Letting $\widehat{G}$ be the group of multiplicative characters $G \to \C^\times$, by Fourier inversion, we therefore have $\mathbb{E}_{\chi \in \widehat{G}} \widehat{1_S}(\chi)^3 \geq (1-10\varepsilon)|S|^2$.
  Note also that, since $1_S$ is the indicator function of a symmetric set of size $|S|$, each Fourier coefficient $\widehat{1_S}(\chi)$ is a real number between $-|S|$ and $|S|$, and by Plancharel's formula we have $\mathbb{E}_{\chi \in \widehat{G}} \widehat{1_S}(\chi)^2 = |S|$.\\
  
  Letting $\Triv\colon G \to \C^\times$, $x \mapsto 1$ be the trivial character, we have $\widehat{1_S}(\Triv) = |S|$. So,
  \[(1-10\varepsilon)|S|^2 - \frac{|S|^3}{|G|} \leq \E_{\chi \neq \Triv} \widehat{1_S}(\chi)^3 \leq \left(\max_{\chi \neq \Triv} \widehat{1_S}(\chi) \right)\left( \E_{\chi \neq \Triv} \widehat{1_S}(\chi)^2 \right) = \left(\max_{\chi \neq \Triv} \widehat{1_S}(\chi) \right)\left( |S| - \frac{|S|^2}{|G|} \right).\]

  Rearranging, this gives
  \[\max_{\chi \neq \Triv} \widehat{1_S}(\chi) \geq |S| \left(1 - \frac{10\varepsilon}{1 - \frac{|S|}{|G|}} \right) > .999 |S|. \]

  Letting $\chi^*$ be a character with $\widehat{1_S}(\chi^*) > .999|S|$, observe that we have 
  \[\widehat{1_S}(\chi^*) = \Re(\widehat{1_S}(\chi^*)) = \Re\left(\sum_{g \in G} 1_S(g)\chi^*(g)\right) = \Re\left(\sum_{x \in S} \chi^*(x)\right).\]
  Since we have $\Re(\chi^*(x)) \leq 1$ for all $x \in S$, by Markov's inequality there must be at least $.999|S|$ values of $x \in S$ with $\Re(\chi^*(x)) > 0$.
\end{proof}

Now, using \cref{lem:buff-character}, we show that most of $S$ lies within some nontrivial subgroup of $G$. 

\begin{lemma}\label{lem:pass-to-subgroup}
  Let $G$ be an abelian group, and $S \subseteq G$ be a symmetric subset.
  Call an element $x \in S$ \defn{good} if $\Pr_{y \in S}[x + y \in S] > 1/2$. 
  If $|S| < (1-20000\varepsilon)|G|$, and $\Pr_{a,b \in S}[a+b \in S] \geq 1 - 10\varepsilon$, then  there exists some nontrivial subgroup $H \leq G$ such that $H \cap S$ contains all of the good elements of $S$. 
\end{lemma}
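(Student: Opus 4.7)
The plan is to take $H = \ker \chi$ where $\chi$ is the character produced by \cref{lem:buff-character}, and show via an iterative Fourier correction that every good element of $S$ lies in this kernel.

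First, apply \cref{lem:buff-character}; unpacking its proof, we may actually take $\chi$ to satisfy $\widehat{1_S}(\chi) > 0.999|S|$, where by symmetry of $S$ this Fourier coefficient is real. Equivalently, $\sum_{y \in S}|1-\chi(y)|^2 = 2(|S|-\widehat{1_S}(\chi)) < 0.002|S|$, so $\chi(y)$ is strongly concentrated near $1$ on $S$ in an $L^2$ sense.

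Now fix a good $x$, so $N(x) := \{y \in S : x+y \in S\}$ has size at least $|S|/2$. The multiplicativity of $\chi$ gives the key inequality
\[
|1-\chi(x)| \;=\; |\chi(y)-\chi(x+y)| \;\le\; |1-\chi(y)| + |1-\chi(x+y)|
\]
for every $y \in N(x)$. A Markov/union-bound argument against the $L^2$ concentration lets us pick $y \in N(x)$ making both right-hand terms small, yielding a first constant bound $|1-\chi(x)| \le c_0$ uniform over good $x$. We then iterate: since the $(1-O(\varepsilon))$-fraction of $S$ consisting of good elements now has $\chi$-image close to $1$, we obtain sharpened $L^2$ concentration of $\chi$ on $S$, and rerun the triangle-inequality step to get a strictly better bound $c_1 < c_0$, and so on. After $O(\log m)$ rounds (where $m$ is the order of $\chi$), the bound drops below the minimum spacing between distinct $m$-th roots of unity, forcing $\chi(x) = 1$ for every good $x$. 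Hence $H = \ker\chi$ --- a proper subgroup of $G$ since $\chi$ is nontrivial --- contains every good element, as desired.

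The main obstacle will be executing the iteration cleanly. A naive triangle-inequality step merely reproduces a bound of the previous order, since the ``bad'' $O(\varepsilon)$-fraction of $S$ contributes a constant amount to the $L^2$ mass regardless of how tight the bound on the good part has become. One has to partition $S$ into layers indexed by the size of $|1-\chi(y)|$ and propagate improvements between neighboring layers using the additive-closure hypothesis, so that the ``typical'' error, rather than merely the worst-case error, shrinks at each round. As a sanity check: if $\ker\chi$ were trivial, the iteration would force every good element to equal $0$, contradicting the fact (Markov applied to $\Pr_{a,b \in S}[a+b \in S] \ge 1-10\varepsilon$) that good elements form a $(1 - O(\varepsilon))$-fraction of $S$; so $\ker\chi$ is necessarily nontrivial.
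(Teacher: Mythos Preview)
Your overall strategy---take $H=\ker\chi$ for the character produced by \cref{lem:buff-character} and argue iteratively that every good element lies in that kernel---is exactly the paper's. The gap is in the iteration itself, which you correctly flag as the obstacle but do not actually resolve.

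The triangle-inequality step $|1-\chi(x)|\le|1-\chi(y)|+|1-\chi(x+y)|$ is essentially the paper's Claim~2: a good element cannot sit more than twice as far from $1$ as the bulk of $S$ does. But this step \emph{doubles} the bound rather than halving it, so on its own it cannot drive the error to zero, and your $O(\log m)$-round contraction is not justified. Your proposed remedy of ``sharpened $L^2$ concentration'' does not work: the starting bound $\sum_{y\in S}|1-\chi(y)|^2<0.002|S|$ is already far tighter than what the uniform bound $|1-\chi(x)|\le c_0\approx 0.18$ on good elements contributes (namely $\approx 0.03|S|$), so there is nothing to sharpen. The ``partition into layers and propagate'' idea points in the right direction but is not a proof.

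The missing piece is a genuine \emph{shrinking} step that exploits additive closure the opposite way. In the paper's language of angles $\theta_x$ with $\chi(x)=e^{i\theta_x}$: if a $\delta$-fraction of $S$ has $\theta_x\in(\theta/2,\theta]$, then sums of two such elements land in $(\theta,2\theta]$, where by hypothesis fewer than $\delta|S|/2$ elements of $S$ live; this forces $\Omega(\delta^2|S|^2)$ pairs with $x+y\notin S$ and hence $\delta\le O(\sqrt{\varepsilon})$. That is the paper's Claim~1, and it is what makes the interval contract. The paper then combines Claim~1 (shrink twice) with the Claim~2-style step (grow once) inside a minimality argument: take the least $\theta$ with all good elements in $[-\theta,\theta]$, deduce they actually lie in $[-\theta/2,\theta/2]$, conclude $\theta=0$. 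This avoids any dependence on the order of $\chi$ and replaces your vague layer-propagation with a concrete counting argument.
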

\begin{proof}
  By \cref{lem:buff-character}, there exists some character $\chi^*$ such that at least $.999|S|$ values of $x \in S$ have $\Re(\chi^*(x)) > 0$.
  For each $x \in S$, we define $\theta_x \in [-\pi, \pi)$ to be the angle such that $\chi^*(x) = e^{i\theta_x}$.
  The condition that $\Re(\chi^*(x)) > 0$ is equivalent to $\theta_x \in \left(-\frac{\pi}{2}, \frac{\pi}{2}\right)$, so we know that most elements have somewhat small angles.
  Our goal is to apply a recursive correction argument to show that in fact most elements have angle $0$, which will then correspond to many elements in a nontrivial subgroup.
  The mechanism for doing so is the following two claims.

  \begin{claim}\label{clm:shrink-range}
  For any $\theta < \frac{\pi}{2}$, and any $\delta > \sqrt{10\varepsilon}$, if at least $(1-\delta)|S|$ many elements $x$ have $\theta_x \in [-\theta, \theta]$, then at least $(1-3\delta)$ many elements $x$ have $\theta_x \in \left[-\frac{\theta}{2}, \frac{\theta}{2}\right]$.
  \end{claim}
  \begin{proof}
    Suppose for the sake of contradiction that this is not the case. Then, there are at least $2\delta |S|$ elements with $\theta_x \in \big(\frac{\theta}{2}, \theta\big] \cup \big[-\theta, -\frac{\theta}{2}\big)$.
    By symmetry of $S$, this means there are $\delta |S|$ many elements with $\theta_x \in \big(\frac{\theta}{2}, \theta\big]$.
    For any pair $x, y$ of such elements, we know $\theta_{x+y} = \theta_x + \theta_y \in (\theta, 2\theta] \subseteq (\theta, \pi)$. 
    But we know there are only at most $\delta |S|$ many elements with $\theta_x$ outside of $[-\theta, \theta]$, so by symmetry of $S$ there are at most $\frac{\delta}{2} |S|$ many elements of $S$ with $\theta_x \in (\theta, \pi)$.
    Since, for a given $x$, every $y$ yields a distinct value of $x+y$, this means that there are at least $\delta |S| \cdot \frac{\delta}{2}|S| = \frac{\delta^2|S|^2}{2}$ pairs $x, y \in S$, with $\theta_x, \theta_y \in \big(\frac{\theta}{2}, \theta\big]$, such that $x + y \not\in S$.
    By symmetry, there are also at least $\frac{\delta^2|S|^2}{2}$ such pairs with $\theta_x, \theta_y \in \big[-\theta, -\frac{\theta}{2}\big)$. 
    But now, since there are at most $3\varepsilon|S|^2$ pairs $x,y\in S$ with $x+y \not\in S$, this implies $\delta^2 \leq 10\varepsilon$.
  \end{proof}

  \begin{claim}\label{clm:grow-range}
    For any $\theta < \frac{\pi}{2}$, if at least $\frac{3|S|}{4}$ many elements $x$ have $\theta_x \in [-\theta, \theta]$, then all good elements $x$ have $\theta_x \in [-2\theta, 2\theta]$.
  \end{claim}
  \begin{proof}
    Let $x \in S$ be such that $\theta_x \not\in [-2\theta, 2\theta]$. 
    Then, for any $y \in S$ with $\theta_y \in [-\theta, \theta]$, we have $\theta_{x+y} = \theta_x + \theta_y \not\in [-\theta, \theta]$. 
    Since there are at least $\frac{3|S|}{4}$ choices for such $y$, and only at most $\frac{|S|}{4}$ choices of $x+y \in S$ with $\theta_x+\theta_y \not\in [-\theta,\theta]$, this means that $\Pr_{y\in S}[x+y \not\in S] \geq 1/2$.
  \end{proof}

  To deduce the lemma from these two claims, let $\theta \geq 0$ be the smallest value such that all good elements have $\theta_x \in [-\theta,\theta]$.
  First, we observe that $\theta < \pi/2$. This follows because we are initially given that at least $.999|S|$ elements have $\theta_x \in \left(-\frac{\pi}{2}, \frac{\pi}{2} \right)$, so \cref{clm:shrink-range} ensures at least $\frac{3}{4}|S|$ many elements have $\theta_x \in \left(-\frac{\pi}{4}, \frac{\pi}{4}\right)$, so, in turn, \cref{clm:grow-range} ensures that all good elements have $\theta_x \in \left( -\frac{\pi}{2}, \frac{\pi}{2}\right)$.\\

  Now, note that there must be at least $1 - 20\varepsilon$ good elements, since otherwise $\Pr_{a,b\in S}[a+b \in S]< 1-10\varepsilon$. 
  Since the statement of \cref{lem:pass-to-subgroup} is vacuous when $\varepsilon \geq 1/20000$, we can assume $\varepsilon < 1/20000$, in which case $1 - 20\varepsilon \geq 1 - \sqrt{10\varepsilon}$.
  Applying \cref{clm:shrink-range} twice, this implies that there are at least $(1-9\sqrt{10\varepsilon}) |S| > \frac{3|S|}{4}$ many elements $x$ with $\theta_x \in \left[-\frac{\theta}{4}, \frac{\theta}{4}\right]$. 
  So, applying \cref{clm:grow-range} implies that all good elements have $\theta_x \in \left[-\frac{\theta}{2}, \frac{\theta}{2}\right]$.
  By minimality of $\theta$, this implies $\theta = \frac{\theta}{2}$, which implies $\theta = 0$. 
  Thus, all good $x$ have $\theta_x = 0$, meaning that they lie in the kernel of $\chi^*$.
  Because the kernel of any character is a subgroup of $G$ and $\chi^*$ is not the trivial character, the kernel is not all of $G$, so we have shown that all good $x$ belong to the same nontrivial subgroup.
\end{proof}

With this machinery in place, it is not too difficult to conclude \cref{lem:subgroup-testable}.

\begin{proof}[Proof of \cref{lem:subgroup-testable}]
  Take $K = 40000$. 
  Because the statement is vacuous when $\varepsilon > \frac{1}{K}$, we will consider only $\varepsilon \leq \frac{1}{40000}$.\\

  Call an element $a \in S$ \defn{super good} if $\Pr_{b \in S}[a+b\in S] \geq 2/3$.
  Let $G' \leq G$ be the smallest subgroup containing all super good elements of $S$, and let $S' = S \cap G'$.
  Observe by Markov's inequality that there are at most $3 \varepsilon |S|$ elements of $S$ that are not super good, so $|S'| \geq (1 - 3\varepsilon)|S|$.
  It will therefore suffice for the desired statement to show that $|G'| \leq (1 + 40000 \varepsilon)|S|$.\\

  If $|S'| \geq (1-20000\varepsilon)|G'|$, then $|G'| \leq \frac{1}{1-20000\varepsilon}|S'| \leq \frac{1}{1-20000\varepsilon}|S| \leq (1+40000\varepsilon)|S|$.
  So, let us assume for the sake of contradiction that $|S'| < (1-20000\varepsilon)|G'|$.
  Note that we have 
  \[\Pr_{a,b \in S'}[a+b \in S'] =  \Pr_{a,b \in S'}[a+b \in S] \geq \Pr_{a,b \in S}[a + b \in S] - \Pr_{a\in S}[a \not\in S'] - \Pr_{b \in S}[b \not\in S'] \geq 1 - 7\varepsilon.\]
  So, we can apply \cref{lem:pass-to-subgroup} to find a nontrivial subgroup $H \leq G'$ such that all $a$ with $\Pr_{b \in S'}[a+b \in S']> 1/2$ lie in $H$.
  But now note that, for any $a \in S'$,
  \[\Pr_{b \in S'}[a + b \in S'] = \Pr_{b \in S'}[a + b \in S] \geq \Pr_{b \in S}[a + b \in S] - \Pr_{b \in S}[b \not\in S'] \geq \Pr_{b \in S}[a + b \in S] - 3\varepsilon.\]
  Thus, if $a$ is super good in $S$, then we must have $\Pr_{b \in S'}[a+b \in S']> 2/3 - 3 \varepsilon \geq 1/2$, which implies $a \in H$.
  So, all super good elements lie in $H$ --- but this contradicts our assumption that $G'$ was the minimal subgroup containing all super good elements.
\end{proof}

One might at this point ask whether \cref{lem:subgroup-testable} holds for non-abelian groups, as this would generalize \cref{thm:rc-cayley-nonexistence} to arbitrary Cayley graphs. 
We suspect that such a statement might be true, although we do not know how to prove it.
A potential approach to proving such results might be through the Balog-Szemer\'edi-Gowers theorem, which gives that, if $\Pr_{a, b, c \in S}[a + b - c \in S] \geq 1/c$, then there exists $A' \subseteq A$ with $|A' + A'| \leq K \cdot c^{O(1)} |A'|$ and $|A'| \geq c^{-O(1)}|A| / K$, where $A' + A' = \{a+b\ |\ a,b \in A'\}$~\cite{balog1994statistical,gowers1998new}.
This result was originally proved for abelian groups, but Tao has generalized it to non-abelian groups~\cite{tao2008product}.
If the statement held with the constant factor $K$ equal to $1$, this would suffice to prove \cref{lem:subgroup-testable} for general groups: the condition $\Pr_{a,b \in S}[a+b\in S] \geq (1-\eps)$ implies $\Pr_{a, b, c \in S}[a + b - c \in S] \geq 1/(1+O(\eps))$, and so we would get $A' \subseteq A$ with $|A'| \geq (1-O(\eps))|A|$ and $|A' + A'| \leq (1+O(\eps))|A|$.
Frieman has shown that, for any set $A'$ with $|A' + A'| \leq \frac{3}{2}|A'|$, it must be that $|A' + A'|$ is a coset, which in our case would mean that $|A' + A'|$ is a subgroup, so this would imply the desired result~\cite{freiman1973number}.
So, an attractive means of establishing \cref{lem:subgroup-testable} for general groups might be searching for variants of the Balog-Szemer\'edi-Gowers theorem that give good quantitative guarantees even for $c$ very close to $1$.
In the nonabelian case, Shao has shown such an ``almost-all'' version of the Balog-Szemer\'edi-Gowers theorem: given any $\eps>0$, there exists a $\delta>0$ such that $\Pr_{a, b, c \in S}[a + b - c \in S] \geq 1/(1+\delta)$ implies the existence of $A' \subseteq A$ with $|A' + A'| \leq (1+\eps) |A'|$ and $|A'| \geq (1-\eps)|A|$~\cite{shao2018almost}.
The quantitative bounds on $\delta$ in terms of $\epsilon$ come from the arithmetic removal lemma and as such are very far from the linear dependence we would need to establish \cref{lem:subgroup-testable}, however one could at least recover a quantitatively weaker version of \cref{thm:rc-cayley-nonexistence} for arbitrary Cayley graphs if one could generalize Shao's result to non-abelian groups.
\section{Existence of $(\vec{r}, \vec{c})$-triangle-regular graphs}\label{sec:lp}

At this point, we have given a partial characterization of values of
$r$ and $c$ that admit an $(r, c)$-triangle-regular graph. We now
study a generalization of this problem to multi-colored
graphs. Namely, our desire is to characterize which $\vec{r}$ and
$\vec{c}$ admit an $(\vec{r}, \vec{c})$-triangle-regular graph.  Note
that, when $t=1$, this is exactly the problem we have been studying
thus far.


\multicolor*


\subsection{Necessary linear programming condition}

The first observation we make is a simple linear system whose feasibility is necessary for the existence of an $(\vec{r}, \vec{c})$-triangle-regular graph.
\begin{proposition}\label{prop:lp-necessary}
    If there exists an $(\vec{r}, \vec{c})$-triangle-regular graph $G$, then there exists a solution to the following system, with variables $x_{ijk}$ for each unordered triple $i,j,k \in \{1,\dots,t\}$:
    \begin{align}
        0 &\leq x_{ijk} & \forall i,j,k \label{eq:1}\\
        \sum_{k \neq i,j} x_{ijk} + 2 x_{iij} + 2 x_{ijj} &\leq \vec{r}[i]\vec{r}[j] &\forall i \neq j \label{eq:2}\\
        \sum_{k \neq i} x_{iik} + 3x_{iii} &\leq \binom{\vec{r}[i]}{2} &\forall i \label{eq:3}\\
        \sum_{j,k \neq i} x_{ijk} + 2\sum_{j \neq i} x_{iij} + 3 x_{iii} &= \vec{c}[i]  &\forall i. \label{eq:4}
    \end{align}
\end{proposition}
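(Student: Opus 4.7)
The plan is to produce the LP solution by counting triangles in $G$ according to their edge-color patterns. Specifically, letting $n = |V(G)|$ and letting $T_{\{i,j,k\}}$ denote the number of triangles in $G$ whose three edges realize the color multiset $\{i,j,k\}$, I would set $x_{ijk} = T_{\{i,j,k\}}/n$. Constraint (1) is then immediate from $T_{\{i,j,k\}} \geq 0$, and the remaining three constraints each follow from a double-counting argument relating a per-vertex quantity to a global triangle count.

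For constraint (4), I would compute $\sum_{v \in V(G)} \vec{c}[i] = n \vec{c}[i]$ from the triangle side. Each triangle with color multiset $\{a,b,c\}$ contributes to $\vec{c}[i]$ exactly at those of its vertices that lie opposite an $i$-colored edge, so its total contribution to the sum is $m_i(\{a,b,c\})$, the multiplicity of $i$ in that multiset. Partitioning the triangles of $G$ by the value of $m_i \in \{0,1,2,3\}$ makes $T_{\{i,i,i\}}$ appear with coefficient $3$, each $T_{\{i,i,j\}}$ with coefficient $2$, and each $T_{\{i,j,j\}}$ or distinct $T_{\{i,j,k\}}$ with coefficient $1$. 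Dividing by $n$ and interpreting $\sum_{j,k \neq i} x_{ijk}$ as a sum over unordered multisets $\{j,k\}$ of non-$i$ colors — which pools both the $T_{\{i,j,j\}}$ and the distinct $T_{\{i,j,k\}}$ contributions — recovers (4).

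For constraints (2) and (3), I would fix a vertex $v$ and two colors $i,j$, then count unordered pairs $\{u,w\} \subseteq N(v)$ with $\{c(vu), c(vw)\} = \{i,j\}$ as a multiset. There are $\vec{r}[i]\vec{r}[j]$ such pairs for $i \neq j$ and $\binom{\vec{r}[i]}{2}$ for $i = j$, which is the RHS of (2) and (3); the triangle-forming pairs split according to the color of the third edge $uw$. Letting $T_{(i,j),k}(v)$ denote the triangles through $v$ of this local type, the key observation is that a triangle with color multiset $\{i,i,j\}$ contributes to $T_{(i,j),i}(v)$ at exactly \emph{two} of its three vertices, and symmetrically a triangle of multiset $\{i,j,j\}$ contributes to $T_{(i,j),j}(v)$ at two of its vertices — which is exactly what produces the factor-of-$2$ coefficients on $x_{iij}$ and $x_{ijj}$ in (2) after averaging over $v$ and converting to $x$-variables. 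The analogous analysis for $i = j$ yields (3), with the factor of $3$ on $x_{iii}$ reflecting that a monochromatic-in-$i$ triangle is seen at all three of its vertices with the given local type.

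The main obstacle is not any hard mathematical step but rather careful combinatorial bookkeeping. One must distinguish the three classes of multisets containing $i$ — namely $\{i,i,i\}$, $\{i,i,j\}$, and $\{i,j,j\}$ with $j \neq i$ — track which of these contribute with multiplicity $1$, $2$, or $3$ in each of the sums in question, and parse the abbreviated summations like $\sum_{j,k \neq i}$ so that their conventions (ordered vs.\ unordered, and whether repetitions are allowed) match the combinatorial contributions.
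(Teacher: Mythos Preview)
Your proposal is correct and takes essentially the same approach as the paper: both define $x_{ijk}$ as $\tfrac{1}{n}$ times the number of triangles with color multiset $\{i,j,k\}$, obtain \cref{eq:4} by summing the per-vertex neighborhood counts and weighting each triangle by the multiplicity of $i$ in its color multiset, and obtain \cref{eq:2}--\cref{eq:3} by bounding, for each $v$, the number of edges between $v$'s $i$-neighborhood and $j$-neighborhood (equivalently your ``pairs $\{u,w\}$ with $\{c(vu),c(vw)\}=\{i,j\}$ that form a triangle'') by $\vec{r}[i]\vec{r}[j]$ or $\binom{\vec{r}[i]}{2}$ and averaging. Your write-up is more explicit about the multiplicity bookkeeping, but the argument is the same.
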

\begin{proof}
    Let $n$ be the number of vertices in $G$, and for each unordered triple $i,j,k$, let $x_{ijk}$ be $\frac{1}{n}$ times the number of triangles with edge colors $i$, $j$, and $k$.
    The neighborhood of each vertex $v$ induces exactly $\vec{c}[k]$ edges of color $k$, so we know that this also holds on average, that is, $\sum_{i,j,k} [\text{\# occurences of $k$ among $i,j,k$}] \cdot x_{ijk} = \vec{c}[k]$. This corresponds to \cref{eq:4}.
    Now, note that if $\sum_{k \neq i,j} x_{ijk} + 2 x_{iij} + 2 x_{ijj} > \vec{r}[i]\vec{r}[j]$ for some $i \neq j$, then by averaging there must be some vertex $v$ such that there are more than $\vec{r}[i]\vec{r}[j]$ many edges between $v$'s $i$-neighborhood and $j$-neighborhood.
    But this is a contradiction, because those neighborhoods have sizes only $\vec{r}[i]$ and $\vec{r}[j]$, respectively.
    By the same reasoning, we cannot have $\sum_{j,k \neq i} x_{ijk} + 2\sum_{j \neq i} x_{iij} + 3 x_{iii} > \binom{\vec{r}[i]}{2}$ for any $i$.
\end{proof}

Note that feasibility of this system is \emph{not} a sufficient condition for the existence of an $(\vec{r},\vec{c})$-triangle-regular graph in general.
For instance, for $t=1$, this system is feasible for all $c \leq \binom{r}{2}$, but we know by \cref{thm:caro-et-al-rc} that there exist some such values not achievable by an $(r,c)$-triangle-regular graph.
However, we note that it \emph{is} ``approximately sufficient'' in the following sense.

\begin{proposition}\label{prop:lp-kinda-sufficient}
    Suppose, for a fixed $\vec{r}$, $\vec{c}$, there exists a solution to the system of equations in \cref{prop:lp-necessary}.
    Then, for all $i$, there exists a $\vec{c'}$ with $\floor{\frac{\vec{c}[i]}{8t^3}} \leq \vec{c'}[i] \leq \vec{c}[i]$ such that there exists an $\left(\vec{r},\vec{c'}\right)$-triangle-regular graph.
\end{proposition}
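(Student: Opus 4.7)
The plan is to construct the desired $(\vec{r}, \vec{c'})$-triangle-regular graph as a Cartesian product of many small gadget graphs, one family per unordered color triple, exploiting the preliminary observation that the Cartesian product of triangle-regular graphs is triangle-regular with parameters summing coordinatewise. For each unordered triple $(i,j,k) \in \{1,\dots,t\}^3$, I would construct a gadget $H_{ijk}$ whose degrees and triangles are supported only on the color set $\{i,j,k\}$, and whose triangles are predominantly of type $\{i,j,k\}$: for $i=j=k$, take a monochromatic $i$-colored triangle-regular graph from \cref{thm:caro-et-al-rc}; for $i=j \neq k$, take a bi-colored abelian Cayley graph whose generating set is partitioned between two colors so as to produce triangles primarily of type $\{i,i,k\}$; and for $i,j,k$ all distinct, take a rainbow-triangle gadget in the spirit of the properly $3$-edge-colored $K_4$ (every color class a perfect matching, every triangle rainbow), scaled up via Cayley constructions on larger abelian groups as needed to realize the desired degrees.

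Given the LP solution $\{x_{ijk}\}$ from \cref{prop:lp-necessary}, I would then choose non-negative integer multiplicities $a_{ijk}$, form $a_{ijk}$ copies of each $H_{ijk}$, and take the Cartesian product of all of them, padding as needed by triangle-free regular graphs in individual colors so that the total degree equals $\vec{r}[\ell]$ in each color $\ell$. By additivity, the resulting graph is $(\vec{r}, \vec{c'})$-triangle-regular with $\vec{c'}[\ell] = \sum_{(i,j,k)} a_{ijk} \cdot \vec{c}^{ijk}[\ell]$. The upper bound $\vec{c'}[\ell] \leq \vec{c}[\ell]$ is enforced by capping multiplicities at the LP budget, and the lower bound $\vec{c'}[\ell] \geq \lfloor \vec{c}[\ell]/(8t^3) \rfloor$ is verified triple-by-triple: the $\binom{t+2}{3} \leq t^3$ unordered triples collectively account for every contribution to $\vec{c}[\ell]$, with a further constant loss of at most $8$ absorbing the integrality of the $a_{ijk}$ and the efficiency with which each gadget converts degree to triangles.

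The hard part will be constructing the bi- and tri-color gadgets so that they are strictly vertex-regular and have predictable triangle-type profiles, since the degrees and triangle counts must match exactly at every vertex. The abelian-Cayley-graph viewpoint is essential here: vertex-transitivity is automatic, and counting triangles reduces to counting additive triples in subsets of the underlying group --- a purely combinatorial task which lets us tune the generating sets to hit the correct parameters. Once such gadgets are in hand, combining them is routine and verifying the $8t^3$ factor becomes a bookkeeping exercise driven by the LP constraints \cref{eq:2}--\cref{eq:4}.
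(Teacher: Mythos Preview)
Your plan is essentially the paper's proof: one gadget per unordered color triple, Cartesian product everything together, then pad with triangle-free regular graphs in each color to hit $\vec{r}$ exactly. The paper's gadgets are more concrete and a bit simpler than what you sketch --- a monochromatic clique for $\{i,i,i\}$, and for mixed triples a four-part graph that is literally the blowup of the properly $3$-edge-colored $K_4$ you mention (with the part sizes chosen so the color-$i$ degree is capped at $\vec{r}[i]/t^2$ and the triangle count is between $\lfloor x_{ijk}/(8t^2)\rfloor$ and $x_{ijk}$), halving the parts for the $\{i,i,k\}$ case. In particular the paper scales each gadget directly to $x_{ijk}$ and takes a single copy, so your separate layer of multiplicities $a_{ijk}$ is unnecessary; if you intend the $H_{ijk}$ to be fixed small graphs and rely on multiplicities alone, be careful --- a bare $K_4$ has triangle-to-degree ratio $1$, so reaching $x_{ijk}\approx \vec r[i]\vec r[j]$ triangles would blow the degree budget, and the scaling step is where the real work happens.
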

\begin{proof}
    For every $i,j,k$, we construct a graph $G_{ijk}$, and we eventually take the Cartesian product of all of these graphs to yield our $\left(\vec{r},\vec{c'}\right)$-triangle-regular graph.

    \paragraph{Case 1} $i = j = k$. In this case, let $y$ be the largest integer such that $\binom{y - 1}{2} \leq x_{iii}$, and let $G_{iii}$ be a clique of color $i$ on $\min\left(y, \frac{\vec{r}[i]}{t^2} + 1\right)$ vertices.

    \paragraph{Case 2} $i,j$, and $k$ are all distinct. Assume without loss of generality that $\vec{r}[i] \leq \vec{r}[j] \leq \vec{r}[j]$. 
    Let $y$ be the largest integer such that $2 y \cdot \frac{\vec{r}[i]}{t^2} \leq x_{ijk}$.
    We construct a graph $G_{ijk}$ with four parts, each with $\min\left(y, \frac{\vec{r}[j]}{t^2}\right)$ vertices. 
    Put complete bipartite graphs colored $j$ between two disjoint pairs of parts, and complete bipartite graphs colored $k$ between another two disjoint pairs of parts.
    Between the remaining two pairs of parts, put a $\left(\frac{\vec{r}[i]}{t^2}\right)$-regular bipartite graph colored $i$ (or a complete bipartite graph if this is larger than the number of vertices in a part).
    
    \paragraph{Case 3} $i = j \neq k$ (the other two remaining cases are symmetric). 
    Here, we give exactly the same construction as in the previous case,
    except that we halve the size of every part. \\
    
    The key facts about these constructions are:
    \begin{enumerate}
        \item Each vertex in $G_{ijk}$ is only involved in triangles with edge colors $i,j,k$, and is involved an equal number of times as each vertex position in such a triangle.
        \item The total number of triangles in $G_{ijk}$ is at least $\floor{\frac{x_{ijk}}{8t^2}}$, and at most $x_{ijk}$.
        \item Each color $i$ in each graph $G_{ijk}$ gives a regular graph with degree at most $\frac{\vec{r}[i]}{t^2}$.\\
    \end{enumerate}

    Now, consider the Cartesian product of all of these graphs. 
    By i), we have ensured that each neighborhood induces the same number of edges of color $i$.
    By ii), we know that this number is between $\sum_{j,k \neq i} \floor{\frac{x_{ijk}}{8t^2}} + 2\sum_{j \neq i} \floor{\frac{x_{iij}}{8t^2}} + 3 \floor{\frac{x_{iii}}{8t^2}} \geq \floor{\frac{\vec{c}[i]}{8t^3}}$ and $\sum_{j,k \neq i} x_{ijk} + 2\sum_{j \neq i} x_{iij} + 3 x_{iii} = \vec{c}[i]$ edges of color $i$.
    Then, by iii), we know that the degree in each color is at most $t^2 \cdot \frac{\vec{r}[t]}{t^2} = \vec{r}[t]$.
    By taking the Cartesian product of this graph with bipartite graphs in each color with smaller degree than $\vec{r}[t]$, we can make the resulting graph $\vec{r}$-regular without creating any new triangles.
    So, we have found a graph satisfying the desired conditions.
\end{proof}

\subsection{Analysis of specific flip graph problems}

\cref{prop:lp-kinda-sufficient} implies that the linear program given by \cref{prop:lp-necessary} gives some sort of approximate characterization of which $\vec{r}$, $\vec{c}$ suffice.
We suspect, however, that these linear conditions are often a much better characterization than what \cref{prop:lp-kinda-sufficient} indicates.
To capture this, we use this linear program to study two questions posed as open problems by Caro et al. \cite{flip1}: we resolve these problems by showing that this linear program (or a mild modification) gives tight results in particular settings.

Both problems concern the notion of a flip graph.

\begin{defin}\label{def:flip}
    A $t$-colored graph $G$ is called a \defn{flip graph} if, for every vertex $v$ and every $i, j \in \{1,\dots,t\}$ with $i<j$, $v$ is incident to strictly more color-$j$ edges than color-$i$ edges, but $N(v) \cup \{v\}$ induces strictly more color-$i$ edges than color-$j$ edges.
    When $G$ is $(\vec{r}, \vec{c})$-triangle-regular, this condition can be written as $\vec{r}[i] < \vec{r}[j]$ and $\vec{r}[i] + \vec{c}[i] > \vec{r}[j] + \vec{c}[j]$ for all $i < j$.
\end{defin}

Caro et al. write that this notion is interesting particularly because it exhibits a``local versus global'' phenomenon: in a flip graph, the closed neighborhood of any vertex contains counts of each color in decreasing order, while the graph as a whole contains counts of each color in increasing order.
They are particularly interested in characterizing which $\vec{r}$ allow the existence of a $(\vec{r}, \vec{c})$-triangle-regular flip graph for some $\vec{c}$, referring to such $\vec{r}$ as \defn{flip sequences}.
They give a complete characterization for $t=2$, but leave several questions open for $t>2$. 
In this section, we address two such questions, obtaining upper bounds via the system of \cref{prop:lp-necessary} and extensions, and giving lower-bound constructions explicitly.

\subsubsection{Maximizing the largest degree in a $3$-term flip sequence}

The simplest case that Caro et al. are unable to characterize is the $3$-color case.
Here, they pose the following question.

\begin{question}[\cite{flip1}]
    Given $a_1 \in \N$, for which values $a_3 \in \N$ do there exist $a_2 \in \N$ such that $(a_1, a_2, a_3)$ is a flip sequence?
\end{question}

Caro et al. show that $a_3 \approx \frac{a_1^2}{2}$ is possible and that $a_3 > 2a_1^2$ is not. 
We improve both the upper and lower bounds. 

\begin{theorem}
    For every $a_1 \in \N$, there exists an $a_2 \in \N$ such that $\left(a_1, a_2, \left(a_1 - 2\ceil{\sqrt{a_1}}\right)^2 \right)$ is a flip sequence, and every flip sequence $\left(a_1, a_2, a_3\right)$ has $a_3 \leq \frac{5 a_1^2}{4}$.
\end{theorem}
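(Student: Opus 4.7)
The plan splits into two independent halves: an explicit construction for the existence half, and an LP-based argument supplemented by graph-theoretic input for the upper bound.

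For the construction, I would set $m := a_1 - 2\lceil\sqrt{a_1}\rceil$ so that $a_3 = m^2$, and build the flip graph by overlaying three subgraphs on a common vertex set in a product-like fashion that creates no cross-color triangles. The color-1 building block would be an $(m, c^*)$-triangle-regular graph from \cref{thm:caro-et-al-rc} with $c^*$ as close to $\binom{m}{2}$ as that theorem allows---the $2\lceil\sqrt{a_1}\rceil$ correction in $m$ is sized precisely to absorb the $5m^{3/2}$-slack of the Caro et al.\ construction. The color-3 factor would be an $m^2$-regular triangle-free graph (e.g., a suitable bipartite or Cayley-type structure), giving $c_3 = 0$. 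The color-2 filler would then be tuned so that $a_2 + c_2$ lies strictly between $a_1 + c_1$ and $a_3 + c_3 = a_3$, completing the flip sequence.

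For the upper bound, I would start from the LP of \cref{prop:lp-necessary}. The flip conditions give $c_1 - c_3 \geq a_3 - a_1 + 2$, $c_2 - c_3 \geq a_3 - a_2 + 1$, and $c_3 \geq 0$. Summing \cref{eq:2} at $(i,j)=(1,2)$ with \cref{eq:3} at $i=1$ (and dropping the negative $x_{ijk}$ contributions to $c_1 - c_3$) yields $c_1 - c_3 \leq \tfrac{1}{2} a_1 (a_1 + a_2 - 1)$; symmetrically, $c_2 - c_3 \leq \tfrac{1}{2} a_2 (a_1 + a_2 - 1)$. Substituting the flip lower bounds produces two upper bounds on $a_3$ parameterized by $a_2$, which meet at the critical ratio $a_2 \approx \tfrac{3}{2} a_1$ and there give the target $a_3 \leq \tfrac{5}{4} a_1^2 + O(a_1)$.

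The main obstacle will be closing the upper bound rigorously: the LP+flip constraints alone are not quite strong enough to force $a_3 \leq \tfrac{5}{4} a_1^2$ uniformly in $a_2$, because letting $a_2$ approach $a_3$ makes the linear constraints vacuous in the sense that they admit solutions at much larger $a_3$. The proof will therefore need to supplement the LP with additional graph-theoretic input---most plausibly by applying \cref{thm:rc-nonexistence} (the single-color nonexistence result for $(r,c)$-triangle-regular graphs with $c$ close to $\binom{r}{2}$) to color-restricted subgraphs, or by exploiting integrality of the edge counts to rule out the borderline LP-feasible configurations with $a_2$ near $a_3$.
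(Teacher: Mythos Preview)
Both halves of your proposal have genuine gaps.

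\textbf{Construction.} Your plan to overlay the three color classes ``in a product-like fashion that creates no cross-color triangles'' cannot produce the stated flip sequence. If no triangle is multicolored, then every color-$1$ edge in $N(v)$ lies between two color-$1$ neighbors of $v$, so $c_1$ equals the triangle count of the color-$1$ subgraph alone. With $\vec r[1]=m:=a_1-2\lceil\sqrt{a_1}\rceil$ this forces $c_1\le\binom{m}{2}<m^2/2$, hence $a_1+c_1<m+m^2/2<m^2=a_3\le a_3+c_3$, and the flip inequality $a_1+c_1>a_3+c_3$ fails for all $a_1\ge 4$. The $2\lceil\sqrt{a_1}\rceil$ correction is not there to absorb the $5m^{3/2}$ slack of \cref{thm:caro-et-al-rc}. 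The paper's construction works precisely because it \emph{does} create cross-color triangles: it takes two color-$1$ cliques on $m+1$ vertices joined by a color-$2$ complete bipartite graph, so that each vertex sees the color-$1$ edges of \emph{both} cliques in its closed neighborhood, yielding $\binom{m+1}{2}+\binom{m}{2}=m^2$ color-$1$ edges. The remaining $2\lceil\sqrt{a_1}\rceil$ of color-$1$ degree is spent on a small extra clique (via Cartesian product) to make the color-$1$ closed-neighborhood count strictly exceed the color-$2$ one.

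\textbf{Upper bound.} You correctly observe that the LP of \cref{prop:lp-necessary} together with the flip inequalities is insufficient, and you even locate the critical regime at $a_2\approx\tfrac32 a_1$. But neither of your proposed patches closes the gap. Applying \cref{thm:rc-nonexistence} to color-restricted subgraphs is not sound: the color-$i$ subgraph of a $(\vec r,\vec c)$-triangle-regular graph need not itself be $(r_i,c)$-triangle-regular for any $c$, since $\vec c[i]$ counts color-$i$ edges among \emph{all} neighbors, not just color-$i$ neighbors. And integrality cannot rule out a family of LP-feasible points that persists over a polynomial-sized range of $a_3$. The missing ingredient is a \emph{nonlinear} constraint coming from triangle supersaturation: within each vertex's color-$i$ neighborhood, the non-$i$ edge density forces a lower bound (Goodman-type) on the number of all-non-$i$ triangles, which translates into an inequality relating $\sum_{j,k,\ell\neq i}x_{jk\ell}$ to $\sum_{j\neq i}x_{iij}$. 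The paper adds exactly this constraint (its \cref{lem:trianglecount-newconstraint}) and then verifies infeasibility of the augmented polynomial system via computer algebra; without it the bound $\tfrac54 a_1^2$ is not attainable.
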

\begin{proof}
    To show that $a_3 = \left(a_1 - 2\ceil{\sqrt{a_1}}\right)^2$ is possible, we give a construction.
    Take two cliques of color $1$ on $(a_1 - 2\ceil{\sqrt{a_1}} + 1)$ vertices and add a complete bipartite graph of color $2$ between them.
    What we have now is a graph with degree $(a_1 -2\ceil{\sqrt{a_1}})$ and $(a_1 - 2\ceil{\sqrt{a_1}} + 1)$ in the first and second colors, respectively, and closed-neighborhood edge counts of
    $\binom{a_1-2\ceil{\sqrt{a_1}} + 1}{2} + \binom{a_1 - 2\ceil{\sqrt{a_1}}}{2} = (a_1-2\ceil{\sqrt{a_1}})^2$ and $(a_1-2\ceil{\sqrt{a_1}} + 1)^2$ in the first and second colors respectively.
    Now, take the Cartesian product of this graph with a color-1 clique on $2\ceil{\sqrt{a_1}} + 1$ vertices,
    and a color-2 complete bipartite graph on $4\ceil{\sqrt{a_1}}$ vertices. The resulting graph has degrees $a_1 < a_1 + 1$ and closed neighborhood edge-counts $(a_1 - 2\ceil{\sqrt{a_1}})^2 + \binom{2\ceil{\sqrt{a_1}}+1}{2} > (a_1 - 2\ceil{\sqrt{a_1}} + 1)^2 + 2\ceil{\sqrt{a_1}}$ in the two colors.
    Finally, take the Cartesian product of this resulting graph with a color-$3$ complete bipartite graph on $2(a_1 - 2\ceil{\sqrt{a_1}})^2$ vertices, and the graph obtained is a flip graph.\\

    To get an upper bound on $a_3$, we can use \cref{prop:lp-necessary}.
    Taking the linear system from the proposition with $\vec{r}$ and $\vec{c}$ as variables, and with the additional constraints that $\vec{r}[1] + \vec{c}[1] > \vec{r}[2] + \vec{c}[2] > \vec{r}[3] + \vec{c}[3]$, that $\vec{r}[3] > \vec{r}[2] > \vec{r}[1]$, and that $\vec{r}[3] > \frac{3\vec{r}[1]^2}{2}$, we obtain a small polynomial system that can be explicitly checked to be unsatisfiable.
    However, if we require only that $\vec{r}[3] > \frac{5\vec{r}[1]^2}{4}$ as opposed to $\vec{r}[3] > \frac{3\vec{r}[1]^2}{2}$, that system \emph{is} satisfiable.
    So, in order to get the stated bound, we need to derive some additional constraints.
    It turns out to be sufficient to add the following nonlinear constraint
    \begin{align*}
        \vec{r}[i] \cdot \sum_{j, k, \ell \neq i} x_{jk\ell} \geq \binom{\vec{r}[i]}{3} \cdot \frac{\sum_{j \neq i} x_{iij}}{\binom{\vec{r}[i]}{2}} \cdot \left(\frac{2\sum_{j \neq i} x_{iij}}{\binom{\vec{r}[i]}{2}} -1 \right)& \qquad \forall i.
    \end{align*}

    The fact that the polynomial system becomes unsatisfiable with this constraint can be verified using a computer algebra system.
    In \cref{lem:trianglecount-newconstraint}, we show that the triangle counts of any $\left(\vec{r},\vec{c}\right)$-triangle regular graph must satisfy this constraint, so this proves that we cannot take $\vec{r}[3] > \frac{5\vec{r}^2}{4}$.
\end{proof}

\begin{lemma}\label{lem:trianglecount-newconstraint}
    Let $G$ be an $\left(\vec{r}, \vec{c}\right)$-triangle regular graph, and let $x_{ijk}$ denote $\frac{1}{n}$ times the total number of triangles with edge colors $i,j$, and $k$.
    Then, for any $i$, we have 
    \begin{align*}
        \vec{r}[i] \cdot \sum_{j, k, \ell \neq i} x_{jk\ell} \geq \binom{\vec{r}[i]}{3} \cdot \frac{\sum_{j \neq i} x_{iij}}{\binom{\vec{r}[i]}{2}} \cdot \left(\frac{2\sum_{j \neq i} x_{iij}}{\binom{\vec{r}[i]}{2}} -1 \right)& \qquad \forall i.
    \end{align*}
\end{lemma}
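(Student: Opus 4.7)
The plan is to establish the inequality by double-counting configurations $(v, \{a, b, c\})$ where $v$ is any vertex and $\{a, b, c\}$ is a triangle in $G$ with no color-$i$ edge whose three vertices all lie in the color-$i$ neighborhood of $v$. Write $N_i(v)$ for the set of color-$i$ neighbors of $v$ (so $|N_i(v)| = \vec{r}[i]$), let $E'(v)$ denote the number of non-color-$i$ edges with both endpoints in $N_i(v)$, and let $\tau(v)$ denote the number of non-color-$i$ triangles lying inside $N_i(v)$.

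First I would establish two easy identities/bounds. Each triangle of type $(i, i, j)$ with $j \neq i$ has a unique ``apex'' vertex where its two color-$i$ edges meet and contributes exactly one non-$i$ edge to $E'$ at that apex, so $\sum_v E'(v) = n \sum_{j \neq i} x_{iij}$. Meanwhile, each non-$i$ triangle $\{a, b, c\}$ in $G$ is counted by $\tau(v)$ for precisely those $v$ that are common color-$i$ neighbors of $a$, $b$, and $c$, and there are at most $\vec{r}[i]$ such $v$ (at most the color-$i$ degree of $a$). Thus $\sum_v \tau(v) \leq \vec{r}[i] \cdot n \sum_{j, k, \ell \neq i} x_{jk\ell}$, which produces the LHS of the lemma after dividing by $n$.

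For a matching lower bound on $\sum_v \tau(v)$, I would apply the Kruskal--Katona theorem to the non-$i$-colored subgraph inside $N_i(v)$, giving $\tau(v) \geq \binom{x_v}{3}$ where $x_v \geq 2$ is the real number with $\binom{x_v}{2} = E'(v)$. Setting $p_v = E'(v)/\binom{\vec{r}[i]}{2}$, a brief algebraic computation then shows that $\binom{x_v}{3} \geq \binom{\vec{r}[i]}{3} p_v(2p_v - 1)$ for every $p_v$: the case $p_v \leq 1/2$ is trivial (the RHS is nonpositive), and for $p_v \geq 1/2$ the claim reduces, using $x_v(x_v - 1) = p_v \vec{r}[i](\vec{r}[i] - 1)$, to $x_v - 2 \geq (\vec{r}[i] - 2)(2p_v - 1)$; squaring and substituting $q = 2p_v - 1$ turns this into a quadratic inequality in $q$ having $q = 1$ as a root that is straightforward to check on $[0, 1]$. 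Since $p \mapsto p(2p - 1)$ is convex, Jensen's inequality then yields $\sum_v \tau(v) \geq n \binom{\vec{r}[i]}{3} p(2p - 1)$ where $p$ is the average of $p_v$ over $v$, which by the first identity equals $\sum_{j \neq i} x_{iij} / \binom{\vec{r}[i]}{2}$. Chaining this lower bound with the upper bound gives the desired inequality.

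The hard step I anticipate is the algebraic verification that $\binom{x_v}{3} \geq \binom{\vec{r}[i]}{3} p_v (2p_v - 1)$. The simpler Moon--Moser lower bound on triangle counts (which is itself convex in the edge count and so also plays nicely with Jensen) turns out to fall short of the target by a factor of roughly $1 + \Theta(1 - p)$, so one really does need Kruskal--Katona's sharper guarantee together with the quadratic-factoring check to close the gap.
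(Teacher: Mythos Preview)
Your argument is correct and follows the same skeleton as the paper's proof: both double-count pairs $(v,T)$ with $T$ a non-$i$ triangle inside $N_i(v)$, upper-bound by $\vec r[i]\cdot n\sum_{j,k,\ell\neq i}x_{jk\ell}$, lower-bound each $\tau(v)$ by a supersaturation inequality, and finish with Jensen on the convex function $p\mapsto p(2p-1)$. The only genuine difference is in the supersaturation step. The paper simply cites Goodman's bound~\cite{goodman1959sets}, which states directly that an $N$-vertex graph of edge density $\rho$ contains at least $\binom{N}{3}\rho(2\rho-1)$ triangles, and then applies Jensen to $z\mapsto\max\bigl(0,z(2z-1)\bigr)$. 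You instead invoke Kruskal--Katona to get $\tau(v)\ge\binom{x_v}{3}$ and then carry out a nontrivial algebraic verification that $\binom{x_v}{3}\ge\binom{\vec r[i]}{3}p_v(2p_v-1)$. That verification is valid (your factoring at $q=1$ goes through for $\vec r[i]\ge 3$, and $\vec r[i]\le 2$ makes the statement trivial), but what it establishes is \emph{exactly} Goodman's inequality. So your remark that ``one really does need Kruskal--Katona'' is off: you are right that the Moon--Moser form $\frac{e(4e-n^2)}{3n}$ is too weak by a factor $1+\Theta(1-\rho)$, but Goodman's sharper $\binom{n}{3}\rho(2\rho-1)$ is precisely the target bound, and citing it avoids the Kruskal--Katona detour and the quadratic check entirely.
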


\begin{proof}
    Note that $n \cdot \sum_{j, k, \ell \neq i} x_{jk\ell}$ is the total number of triangles all of whose edges are colors other than $i$.
    Since every vertex has exactly $\vec{r}[i]$ incident edges of color $i$, we know that the vertices of any such triangle have at most $\vec{r}[i]$ common $i$-neighbors.
    So, we can lower bound the total number of non-$i$ triangles by $\frac{1}{\vec{r}[i]} \cdot \sum_{v} [\text{\# non-$i$ triangles in $v$'s $i$-neighborhood}]$.\\

    Now, we can lower bound those counts by using triangle supersaturation. 
    It is known that any $N$-vertex graph of edge density $\rho$ must contain at least $\binom{N}{3} \cdot \rho(2\rho - 1)$ triangles~\cite{goodman1959sets}.
    (In fact, tight lower bounds are known by a result of Razborov~\cite{razborov2008minimal} --- however, this simpler bound suffices for our purposes.)
    So, if we let $y_v$ denote the number of non-$i$ edges in $v$'s neighborhood, we can write 
    \[\sum_{v} [\text{\# non-$i$ triangles in $v$'s $i$-neighborhood}] \geq \sum_{v} \binom{\vec{r}[i]}{3} \cdot \min\left(0, \left(\frac{y_v}{\binom{\vec{r}}{2}}\right) \cdot \left(\frac{2y_v}{\binom{\vec{r}}{2}} - 1\right)\right).\]
    As $z \mapsto \min\left(0, \left(\frac{z}{\binom{\vec{r}}{2}}\right) \cdot \left(\frac{2z}{\binom{\vec{r}}{2}} - 1\right)\right)$ is a convex function in $z$, we can further write
    \begin{align*} 
        \frac{1}{n}\sum_{v} \binom{\vec{r}[i]}{3} \cdot \min\left(0, \left(\frac{y_v}{\binom{\vec{r}}{2}}\right) \cdot \left(\frac{2y_v}{\binom{\vec{r}}{2}} - 1\right)\right) &\geq \binom{\vec{r}[i]}{3} \cdot \min\left(0, \left(\frac{\sum_v y_v}{n \cdot \binom{\vec{r}}{2}}\right) \cdot \left(\frac{2\sum_v y_v}{n \cdot \binom{\vec{r}}{2}} - 1\right)\right)\\
        &= \binom{\vec{r}[i]}{3} \cdot \left(\frac{\sum_{j \neq i} x_{iij}}{\binom{\vec{r}}{2}}\right) \cdot \left(\frac{2\sum_{j \neq i} x_{iij}}{\binom{\vec{r}}{2}} - 1\right).
    \end{align*}
    Combining these inequalities yields the desired statement.
\end{proof}

\subsubsection{Unbounded flip sequences for $t > 3$}

For flip sequences longer than three terms, however, Caro et al. show that $a_t$ can no longer be bounded in terms of $a_1$.
Specificially, they show that for any $t$, there exists some value $a_1$ such that for all $B$ there exists a flip sequence $(a_1, a_2, \dots, a_t)$ with $a_t > B$.
In other words, as long as $a_1$ is larger than some fixed constant depending only on $t$, $a_t$ can be made arbitrarily large.
In their proof of this result, the resulting constant $a_1$ grows with $t$; they ask what the true rate of growth is.

\begin{question}[\cite{flip1}]
    For a given $t$, what is the minimum constant $a_1$ such that there exist flip sequences $(a_1, \dots, a_t)$ with $a_t$ arbitrarily large?
\end{question}

In the following, we demonstrate by construction that in fact $a_1$ can be taken to $1$ as $t$ grows large.
Furthermore, we show using the system of \cref{prop:lp-necessary} that this construction is tight.

\begin{theorem}
    We have the following:
    \begin{itemize}
        \item There exists a flip sequence $(3, a_2, a_3, a_4)$ for arbitrarily large $a_4$, but when $a_1 \leq 2$, then $a_4$ cannot be made arbitrarily large.
        \item There exists a flip sequence $(2, a_2, a_3, a_4, a_5)$ for arbitrarily large $a_5$, but when $a_1 = 1$, then $a_5$ cannot be made arbitrarily large.
        \item There exists a flip sequence $(1, a_2, a_3, a_4, a_5, a_6)$ for arbitrarily large $a_6$.
    \end{itemize}
\end{theorem}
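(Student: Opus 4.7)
The theorem has three parts: each asserts the existence of a flip sequence with arbitrarily large final term, and parts (i) and (ii) additionally claim impossibility for smaller $a_1$. My plan is to address existence via explicit Cartesian-product constructions and impossibility via the linear program of \cref{prop:lp-necessary}.

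The existence arguments will all follow a common template. Taking a Cartesian product with $K_{m,m}$ in some color $\ell$ adds $m$ to $r_\ell$ and leaves every $c_i$ unchanged, since $K_{m,m}$ is triangle-free. So to make $r_t$ arbitrarily large, it suffices to construct a ``core'' $(\vec{r}, \vec{c})$-triangle-regular flip graph in which the slack $r_{t-1} + c_{t-1} - r_t - c_t$ can be made arbitrarily large by tuning some parameter. For case~(i) with $a_1 = 3$, the plan is to take the core as $H \boxtimes K_4$, where the $K_4$ factor contributes color~1 (so $r_1 = 3$ and $c_1 = 3 + 6 d_H$), and $H$ is a $3$-edge-colored triangle-regular graph built as a Cartesian product of single-color triangle-regular graphs (existing by \cref{thm:caro-et-al-rc}) in colors $2, 3, 4$. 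The $6 d_H$ term in $c_1$ lets the color-1 flip slack scale with $d_H$, and a bipartite Cartesian product in color~4 then pushes $r_4$ arbitrarily high. Cases~(ii) and~(iii) proceed analogously using $H \boxtimes K_3$ and $H \boxtimes K_2$ respectively; the extra available color gives the extra slack needed to compensate for the smaller $r_1$, and $H$ is again built by Cartesian product of the per-color constructions from \cref{thm:caro-et-al-rc}.

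For the impossibility claims, the plan is to combine the LP of \cref{prop:lp-necessary} with the nonlinear inequality of \cref{lem:trianglecount-newconstraint}, the flip constraints $r_1 < \cdots < r_t$ and $r_1 + c_1 > \cdots > r_t + c_t$, and the direct combinatorial bound $c_i \leq r_i \cdot R / 2$ (where $R = r_1 + \cdots + r_t$, valid since the global color-$i$ subgraph is $r_i$-regular). The crucial observation is that $\binom{r_1}{2} \in \{0, 1\}$ forces the LP variables $x_{111}$ and $x_{11k}$ to vanish or be at most $1$; feeding this into the bipartite-pair constraint $\sum_{k \neq 1, j} x_{1jk} + 2 x_{1jj} \leq r_1 r_j$ and summing over $j$ yields $c_1 \leq \tfrac{1}{2}(r_2 + \cdots + r_t) + O(1)$. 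Combined with the flip chain $r_i + c_i > r_t + c_t$ and the bounds on each intermediate $c_i$, the constraints reduce to a finite polynomial inequality system in $(r_1, \ldots, r_t)$. Following the template of the $t = 3$ analysis in the preceding subsection, this system can be verified infeasible via a computer algebra system whenever $r_t$ exceeds an explicit bound depending only on $a_1$.

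The main obstacle I anticipate is the construction for case~(iii): with $r_1 = 1$ and a flip chain of length six, one must balance five strict flip inequalities simultaneously, so the color-degree sequence of $H$ and the triangle counts of each Cartesian factor must be chosen so that $c_i^H$ strictly decreases faster than $d_i^H$ increases, all while remaining consistent with the achievable parameters of \cref{thm:caro-et-al-rc}. On the impossibility side, a secondary concern is that \cref{lem:trianglecount-newconstraint} may not suffice at the tight boundary when $a_1 = 2$; in that case, either a sharper triangle-supersaturation inequality (e.g.\ Razborov's~\cite{razborov2008minimal}) or a direct case analysis on the small values of $r_2, r_3$ would need to be invoked to close the residual gap.
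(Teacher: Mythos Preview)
Your plan diverges from the paper in both halves, and while neither direction is fundamentally wrong, both are more complicated than necessary and leave the actual work undone.

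\textbf{Existence.} The paper uses a single uniform construction for all three cases rather than strong products: take many disjoint copies of $K_{8-t}$ in color~$1$, split them into two equal groups, span the groups by a complete bipartite graph whose edges are distributed evenly among colors $2, \dots, t$, and then Cartesian-product with a fixed small flip graph on colors $2, \dots, t$ to break the symmetry among those colors. The only nontrivial check is the inequality $(t-1)\binom{8-t}{2} > (8-t)^2$ for $t \in \{4,5,6\}$, which ensures the color-$1$ closed-neighborhood count dominates. Your strong-product scheme $H \boxtimes K_{8-t}$ can also be made to work---your formula $c_1 = \binom{7-t}{2} + \binom{8-t}{2}\, d_H$ is correct provided the diagonal edges inherit their color from $H$---but you have not done the main step. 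For the flip chain to hold you need $d_i^H + c_i^H$ strictly decreasing while $d_i^H$ strictly increases, \emph{and} it is the slack $r_{t-1} + c_{t-1} - r_t - c_t$ (not the color-$1$ slack) that must scale with the target $a_t$, which forces all of $H$ to grow with $a_t$. This is achievable with careful parameter choices, but it is a genuine calculation you have not indicated how to carry out, and your remark that ``the $6d_H$ term in $c_1$ lets the color-$1$ flip slack scale'' misidentifies which inequality is the bottleneck. The paper sidesteps all of this by making colors $2,\dots,t$ initially identical and outsourcing their ordering to a constant-size gadget.

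\textbf{Impossibility.} You are over-equipped. The paper does not invoke \cref{lem:trianglecount-newconstraint} (or your extra bound $c_i \le r_i R/2$) for the $t \ge 4$ cases at all: the system of \cref{prop:lp-necessary} together with the flip inequalities and the bounds $\vec r[1] \le 7-t$, $\vec r[t] > 1000$ is already infeasible, by direct computer-algebra verification. Your worry that the nonlinear constraint ``may not suffice at the tight boundary when $a_1 = 2$'' is therefore moot---that constraint was introduced only to sharpen the separate $t=3$ bound from $\tfrac{3}{2}a_1^2$ to $\tfrac{5}{4}a_1^2$, and plays no role here.
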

\begin{proof}
    In all three cases, the construction is as follows: take $20000B(t-1)$ cliques of size $8-t$ in color $1$. 
    Then, place a complete bipartite graph between the first $10000B(t-1)$ of those cliques and the other $10000B(t-1)$ of them.
    Color this complete bipartite graph with colors $a_2, \dots, a_t$ in such a way as to ensure that each vertex has degree $10000B(8-t)$ in each of those colors.
    (For instance, decompose the complete bipartite graphs into matchings and color $10000B$ of the matchings color $i$.)
    Let $G$ be the graph that we have constructed at this point. 
    Then, take the Cartesian product of $G$ with a small flip graph on colors $(a_2, \dots, a_t)$.
    (Caro et al. give small constructions~\cite{flip1} --- we certainly know, for instance, that there exists some flip graph with fewer than $100$ vertices whenever $t \leq 6$.)\\

    To see that this construction works, consider the closed neighborhood $N[v]$ of any vertex $v$ in $G$.
    We have that $N[v]$ consists of its own clique, plus all $10000B$ cliques on the other side.
    So, $N[v]$ contains $(10000B + 1)(t-1)\binom{8 - t}{2}$ edges of color $1$, and $10000B(8-t)^2$ edges of each other color.
    Because $(t-1)\binom{8 - t}{2} > (8-t)^2$ for all $t \in \{4,5,6\}$, $N[v]$ contains far more edges of color $1$ than it does edges of the other colors.
    Since all other colors have exactly the same degree and closed-neighborhood edge count for every vertex, taking the Cartesian product of $G$ with a small flip graph on these colors $(a_2, \ldots, a_t)$, we obtain a flip graph.\\

    To show that this construction is tight, we can take for each $t \in \{4,5,6\}$ the system of \cref{prop:lp-necessary}, along with the conditions that
    \begin{align}
      0 &\leq \vec{r}[1] \leq 7 - t \\
      \vec{r}[t] &> 1000 \\
      \vec{r}[1]+1 &\leq \vec{r}[2];~\dots~; \vec{r}[t-1]+1 \leq \vec{r}[t] \\
      \vec{r}[1] + \vec{c}[1] &\geq \vec{r}[2] + \vec{c}[2] + 1; \vec{r}[2] + \vec{c}[2] \geq \vec{r}[3] + \vec{c}[3] + 1;~\dots~;\vec{r}[t-1] + \vec{c}[t-1] \geq \vec{r}[t] + \vec{c}[t] + 1.
    \end{align}

    Each of these systems can be explicitly verified to be unsatisfiable.
\end{proof}

\section{Hardness of $(\vec{r}, \vec{c})$-triangle-regular colorings and flip colorings}\label{sec:hard}

In \cref{sec:lp}, we addressed questions of the form ``given $\vec{r}$ and $\vec{c}$, does there exist an $(\vec{r}, \vec{c})$-triangle-regular graph?''.
Another interesting, related question is: given a \emph{particular} graph $G$, do there exist $\vec{r}$ and $\vec{c}$ such that $G$ admits an $(\vec{r}, \vec{c})$-triangle-regular coloring?
In this section, we demonstrate that such questions are $\NP$ hard, in general. 
We also show by similar techniques that determining whether a graph admits a \emph{flip coloring}, as defined in the previous section, is $\NP$ hard --- this answers a question of Caro et al.~\cite{flip1}.

\subsection{$\NP$-completeness of finding a $(\vec{r}, \vec{c})$-triangle-regular coloring}

We first demonstrate the following theorem.
\rchard

We reduce from the $\NP$-complete problem \emph{Positive 1-in-3-SAT-E4}.

\begin{restatable}[Positive 1-in-3-SAT-E4]{prob}{1in3sat}\label{prob:1in3sat}
  Given a boolean formula $\varphi$ in 3-CNF (conjunctive normal form,
  where each clause contains three literals) such that (1) $\varphi$
  contains no negations, (2) each clause contains three distinct
  variables, and (3) each variable appears in exactly four clauses,
  determine whether $\varphi$ has an truth assignment such that every
  clause contains exactly one true variable, that is, a \defn{1-in-3
  assignment}.
\end{restatable}

$\NP$-hardness of this problem has been shown by Porschen, Schmidt, and Speckenmeyer~\cite{porschen2010complexity,schmidt2010computational}.

\begin{proof}[Proof of \cref{thm:rchard}]
  Containment in $\NP$ is immediate, since a coloring can be easily checked.
  We will specifically show hardness for the case where $\vec{r} = (1,6)$ and $\vec{c} = (1,2)$.
  That is, the input is a $(7,3)$-triangle-regular graph, and it is a yes instance if its edges can be colored red and blue, such that every vertex has exactly one red neighbor, and exactly one red edge in its neighborhood.
  To show hardness, we construct, given a Positive 1-in-3-SAT-E4 instance $\varphi$, a graph $G$ that is $(\vec{r}, \vec{c})$-triangle-regular-colorable if and only if $\varphi$ has a 1-in-3 assignment.\\

  Towards this construction, let us first suspend disbelief and assume that we are capable of also adding ``dangling'' edges with only one endpoint.
  At the end of the description, we will pair off those edges and get an actual graph (without creating any triangles involving those edges).
  Using these dangling edges, we can design the following variable gadget.
  We think of this gadget as having $8$ ``internal vertices,'' and $4$ ``attachment vertices'' --- the internal vertices have degree $7$ already in this gadget, whereas the attachment vertices only have degree $2$, and must be combined with other components of the overall construction.
  In order to satisfy the $(\vec{r}, \vec{c})$-triangle-regular conditions for all $8$ internal vertices, one can readily verify that the gadget must be assigned a rotation of one of the four colorings shown in \cref{fig:rc-vargadget}, which we refer to as $\texttt{TRUE}$, $\texttt{FALSE}_1$, $\texttt{FALSE}_2$, and $\texttt{FALSE}_3$, respectively.\\
  
  \begin{figure}[htp]
    \centering
    \includegraphics[width=.7\linewidth]{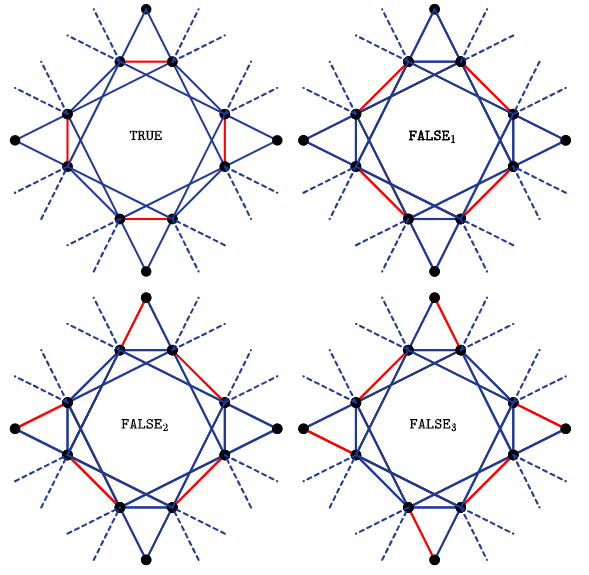}
    \caption{A variable gadget, shown in each of the four valid colorings. Dashed edges are ``dangling.'' Observe that in all of these settings, every internal vertex has $6$ blue neighbors and $1$ red neighbor, and has a neighborhood consisting of $2$ blue edges and $1$ red edge.}
    \label{fig:rc-vargadget}
  \end{figure}

  Our clause gadget will be simple: merge together $3$ attachment vertices, one from each variable gadget corresponding to a variable in the clause.
  Then, add a ``dangling'' edge. This gadget is shown in \cref{fig:rc-clausegadget}.
  Observe that, assuming that each of the vertex gadgets is given a valid assignment, such a clause vertex satisfies the conditions for an $(\vec{r}, \vec{c})$-triangle-regular graph if and only if exactly one of the three incident variable gadgets is assigned \texttt{TRUE}.
  Thus, regardless of how the dangling edges are paired up, if this graph has a valid $(\vec{r},\vec{c})$ coloring, we can obtain a 1-in-3-SAT assignment to $\varphi$ by setting every variable whose gadget is in configuration $\texttt{FALSE}_1$, $\texttt{FALSE}_2$, or $\texttt{FALSE}_3$ to be $\texttt{FALSE}$, and every variable whose gadget is set $\texttt{TRUE}$ to $\texttt{TRUE}$.\\

  \begin{figure}[htp]
    \centering
    \includegraphics[width=.8\linewidth]{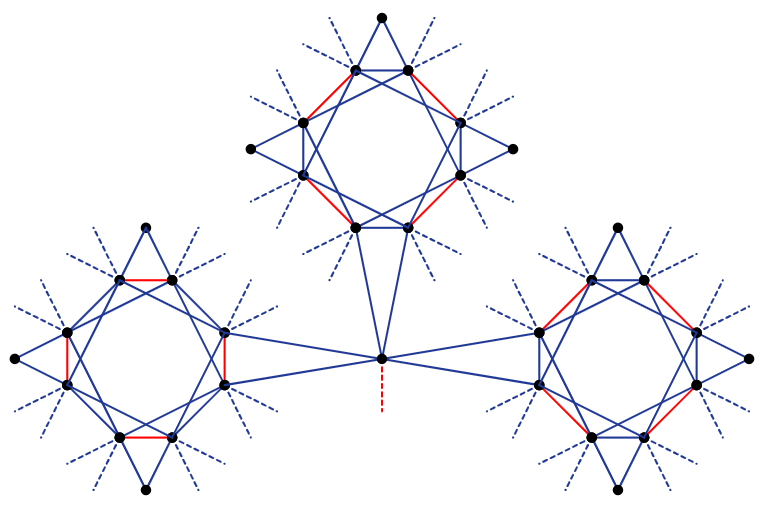}
    \caption{A clause gadget. 
    Observe that a \emph{necessary} condition for the gadget to be satisfied is that exactly one of the constituent variable gadgets is set \texttt{TRUE}, while a \emph{sufficient} condition is that the variable gadgets are set \texttt{TRUE}, $\texttt{FALSE}_1$, $\texttt{FALSE}_1$ and the dangling edge is red.
    \label{fig:rc-clausegadget}}
  \end{figure}

  It suffices now to show that the dangling edges can be paired up so that the reverse implication holds.  
  One straightforward way of doing so is to take $32$ copies of the entire construction, arrange them into two groups of $16$, and send each dangling edge from a vertex gadget to a different copy of that vertex gadget in the other group.
  This ensures that all edges dangling from vertex gadgets are paired up, without creating any new triangles.
  Each dangling edge from a clause gadget can similarly be paired with one of the other copies of that clause gadget.
  Now, given a 1-in-3-SAT assignment to $\varphi$, observe that we can obtain a valid $(\vec{r},\vec{c})$-coloring of this graph by setting all variable gadgets corresponding to true variables \texttt{TRUE}, setting all variable gadgets corresponding to false variables $\texttt{FALSE}_1$, coloring all dangling edges between variable gadgets blue, and coloring all dangling edges between clause gadgets red.

\end{proof}

\subsection{$\NP$-completeness of finding a flip coloring}

In this section, we consider instead, the notion of a \defn{flip graph}. 
Recall from \cref{def:flip} that a $2$-coloring of a graph is a \defn{flip coloring} if and only if every vertex has red degree strictly smaller than blue degree, but strictly more red than blue edges in the closed neighborhood.
In \cite{flip1}, Caro et al. pose the question of whether it is algorithmically tractable to determine if a given graph admits a flip coloring.
We show that, assuming $\P \neq \NP$, the answer is no.

\begin{theorem}\label{thm:fliphard}
    Let 2-FLIP denote the problem of determining, given a graph $G$, whether $G$ admits a $2$-color flip-coloring.
    2-FLIP is $\NP$-complete.
\end{theorem}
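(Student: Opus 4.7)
The plan is to reduce from Positive 1-in-3-SAT-E4 (\cref{prob:1in3sat}), following the same template as the proof of \cref{thm:rchard}. Membership in $\NP$ is immediate, as a candidate $2$-coloring can be verified by checking the flip inequalities $d_r(v) < d_b(v)$ and $e_r(N[v]) > e_b(N[v])$ at each vertex in polynomial time. For hardness, given an instance $\varphi$ of Positive 1-in-3-SAT-E4, we will construct a graph $G_\varphi$ whose flip-colorings correspond exactly to 1-in-3 satisfying assignments of $\varphi$.

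The core of the argument is to design variable and clause gadgets whose valid flip-colorings are forced into a small number of canonical states. Recall that flip imposes strict but asymmetric inequalities: blue edges must dominate the degrees, while red edges must dominate the closed neighborhoods. The rough plan is to build a variable gadget so that, at every internal vertex $v$, both $d_b(v) - d_r(v)$ and $e_r(N[v]) - e_b(N[v])$ equal exactly $1$ in any flip-coloring. Then any local deviation from a canonical coloring would change one of these differences to $0$ or $-1$, violating strictness. I expect this to force the gadget into a constant-size set of valid colorings, which partition naturally into one \texttt{TRUE} and several \texttt{FALSE} classes analogous to \cref{fig:rc-vargadget}. A clause gadget then identifies attachment vertices of three variable gadgets corresponding to a common clause, connected through auxiliary vertices whose flip inequalities are tight precisely when exactly one of the three incident variables is \texttt{TRUE}; Positive 1-in-3-SAT-E4 is a natural source for the reduction because its exactly-one-true condition mirrors the built-in asymmetry of the flip constraints.

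As in the proof of \cref{thm:rchard}, the construction will initially use ``dangling'' half-edges, which are then paired off by taking two copies of the entire construction and matching each dangling edge to its mirror in the other copy; this ensures that the pairings themselves create no new triangles to spoil the gadget analyses. Given a 1-in-3 assignment to $\varphi$, we would then exhibit an explicit flip-coloring of $G_\varphi$, while conversely any flip-coloring of $G_\varphi$ must respect every gadget's canonical states and hence yield a consistent 1-in-3 assignment.

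The main obstacle will be the gadget design itself. In \cref{thm:rchard}, the equalities on $\vec{r}$ and $\vec{c}$ pin down the variable gadget to exactly four valid colorings; with only strict inequalities available, many more spurious colorings could in principle arise. The technical heart of the proof is therefore to build a gadget in which each internal vertex has both flip inequalities ``tight to within one,'' so that any local change breaks strictness somewhere. I expect this to require introducing auxiliary witness vertices whose flip conditions rule out unintended colorings, along with careful degree and neighborhood-edge accounting to ensure that the intended canonical colorings still satisfy all inequalities strictly.
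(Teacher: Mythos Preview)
Your proposal has the right high-level template—variable gadgets, clause gadgets, dangling edges paired off via copies—but it stops short of the actual technical content, and the concrete choices you sketch diverge from the paper in ways worth flagging.

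First, the paper reduces from Positive \emph{NAE}-3-SAT-E4, not 1-in-3-SAT. This is not incidental. A flip constraint at a vertex is a one-sided inequality: ``not too many red incident edges'' and ``not too many blue edges in $N[v]$.'' These naturally encode ``not all \texttt{TRUE}'' and ``not all \texttt{FALSE}'' at a clause witness, which is precisely NAE. The paper's clause gadget has two witness vertices, one failing when all three variables are \texttt{TRUE} and another failing when all are \texttt{FALSE}. Encoding 1-in-3 would additionally require ruling out the 2-\texttt{TRUE} case, which does not drop out of a single flip inequality; your claim that the asymmetry of flip ``mirrors'' exactly-one-true is not right, and this choice would make the clause gadget substantially harder to design.

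Second, and more substantively, the proposal is missing the mechanism that actually makes the paper's gadgets rigid: a primitive that \emph{forces} designated edges to a specific color. The paper builds a ``blue dangler generator''—a $K_4$ with four pendant edges at each vertex—whose only valid flip-coloring has the clique red and all pendants blue. These forced-blue pendants are then used as the dangling edges throughout the construction; once they are pinned blue, the auxiliary gadgets' internal edges are forced red, and only then does the variable gadget's central $8$-cycle become constrained to alternate red/blue (the \texttt{TRUE}/\texttt{FALSE} states). Your ``tight to within one'' heuristic is plausible intuition, but without a color-forcing primitive there is no leverage to exclude the many spurious colorings you yourself anticipate—strict inequalities alone do not pin anything down until some edge colors are already fixed. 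The heart of the paper's proof is this cascade of forced colors originating from the $K_4$ generators, and that idea is absent from your plan.
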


Our overall approach to the proof of \cref{thm:fliphard} is similar to that of \cref{thm:rchard}, but the scenario is meaningfully different.
On the one hand, we have more flexibility in design, in that the graph we produce in the reduction need not be $(r,c)$-triangle-regular.
On the other hand, the flip condition requires care to determine that given vertices have close to balanced colors in both degree and neighborhood count --- we are no longer be able to have so small a fraction of red edges as we did in the construction of \cref{thm:rchard}.
We will start from a different 3-SAT variant.

\begin{restatable}[Positive NAE-3-SAT-E4]{prob}{nae3sat}\label{prob:nae3sat}
Given a boolean formula $\varphi$ in 3-CNF such that (1)
$\varphi$ contains no negations, (2) each clause contains three
distinct variables, and (3) each variable appears in exactly four
clauses, determine whether $\varphi$ has a \defn{not-all-equals assignment}, that is, an assignment such that
there is no clause in $\varphi$ where all three variables are assigned
the same truth value.
\end{restatable}

In a recent paper, Darmann and D\"{o}cker show that Positive
NAE-3-SAT-E4 is NP-complete~\cite{darmann2020simple}. We reduce from Positive
NAE-3-SAT-E4 to 2-FLIP. 

\begin{proof}[Proof of Theorem~\ref{thm:fliphard}]

It is again clear that 2-FLIP is in $\NP$, as a $2$-coloring can be easily verified. To show that
2-FLIP is NP-hard, we show how to convert an instance $\varphi$ of Positive NAE-3-SAT-E4 into a graph $G$, such that $G$ has a flip coloring if and only if $\varphi$ has a not-all-equal assignment.\\

As in the proof of \cref{thm:rchard}, we first construct gadgets with dangling edges and attachment points, and then show how to assemble them together.
Our variable gadget will function similarly --- i.e., its core will consist of an $8$-cycle of edges that are forced to alternate between red and blue.
In order for the flip condition to force this to happen, though, we need to glue on some additional machinery.
Specifically, we end up attaching several copies of the ``auxiliary gadget'' shown in \cref{fig:auxiliary-gadget}, gluing along the vertex denoted $A$.

\begin{figure}[htp]
  \centering
  \includegraphics[width=.4\linewidth]{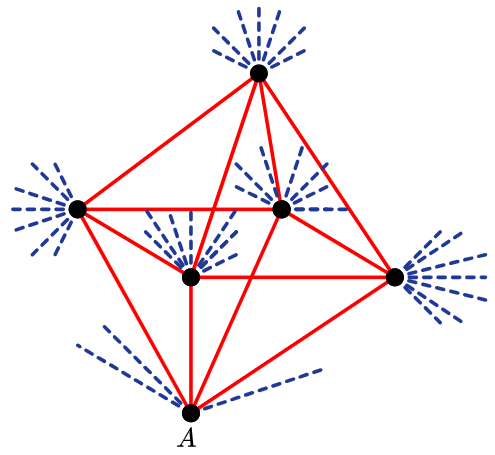}
  \caption{An auxiliary gadget, which we will glue copies of along $A$ to several vertices in the vertex gadget.}
  \label{fig:auxiliary-gadget}
\end{figure}

Observe that, as long as the dangling edges are all colored blue (which we will force to happen in the final construction), the flip condition is satisfied at all of the non-$A$ vertices if and only if the other edges are all colored red.
If they are  all red, then each of these vertices has $4$ red neighbors, $7$ blue neighbors, $8$ red edges in the closed neighborhood, and $7$ blue edges.
However, if even one edge is blue, then some closed neighborhood will have more blue than red. Now, using this, our variable gadget is shown in \cref{fig:flip-vargadget-unassigned}.

\begin{figure}[htp]
  \centering
  \includegraphics[width=.35\linewidth]{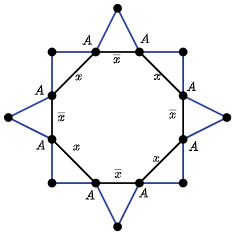}
  \caption{A variable gadget, without an assignment. Note that we assume that all edges shown other than the central $8$-cycle have been forced to be blue by the clause gadgets. We think of the edges around the cycle as alternatingly representing the variable and its negation.}
  \label{fig:flip-vargadget-unassigned}
\end{figure}

This gadget has $8$ ``internal vertices'' arranged in an $8$ cycle, each of which is attached to a copy of the auxiliary gadget.
Each edge in this $8$ cycle is used in some clause gadget, which means that it will be involved in some triangle outside the variable gadget --- by construction of the clause gadgets, the other two edges in each of those triangles will be forced to be blue.
We think of the edges around the $8$ cycle as alternating between representing the variable and its negation.
This is because the only two valid flip assignments to this gadget are as follows, which we think of as representing \texttt{TRUE} and \texttt{FALSE}, respectively:

\begin{figure}[htp]
  \centering
  \includegraphics[width=.8\linewidth]{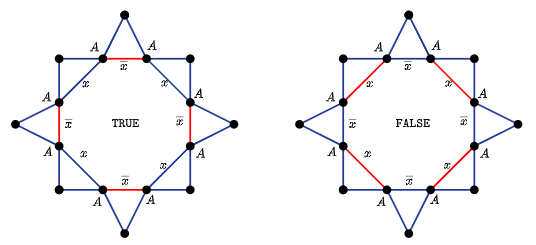}
  \caption{The two possible assignments of the variable gadget. Note that in the \texttt{TRUE} case, all edges corresponding to the variable are blue, while those corresponding to its negation are red, and vice-versa in the \texttt{FALSE} case.}
  \label{fig:flip-vargadget-unassigned}
\end{figure}

To see that these are the only two valid options, observe that, aside from the edges in the $8$-cycle, each internal vertex has blue degree $5$, red degree $4$, and blue and red closed neighborhood counts of $6$ and $8$, respectively. 
So, if both incident edges in the $8$-cycle are colored red, then the degree constraint is violated, whereas if both incident edges are blue, then the neighborhood constraint is violated. 
Thus, the valid flip colorings are precisely those that alternate between red and blue around the cycle.\\

We now discuss how to build clause gadgets on top of these variables. The structure of a clause gadget is shown in \cref{fig:flip-clausegadget-unassigned}.

\begin{figure}[htp]
  \centering
  \includegraphics[width=.7\linewidth]{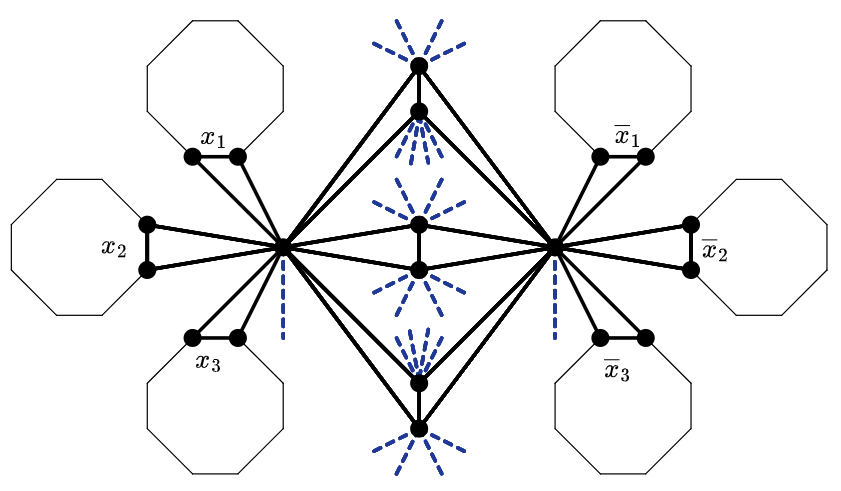}
  \caption{A gadget representing a clause on variables $x_1$, $x_2$, and $x_3$. Octagons represent the corresponding variable gadgets. Note that, while we have drawn distinct octagons for $x_i$ and $\overline{x}_i$, in reality these are two edges in the same variable gadget.}
  \label{fig:flip-clausegadget-unassigned}
\end{figure}

As with the auxiliary gadget, we assume that the dangling edges have been forced to be blue.
Given this, the neighborhood and degree constraints ensure the remaining edges must be colored as in \cref{fig:flip-clausegadget-forced}.

\begin{figure}[htp]
  \centering
  \includegraphics[width=.7\linewidth]{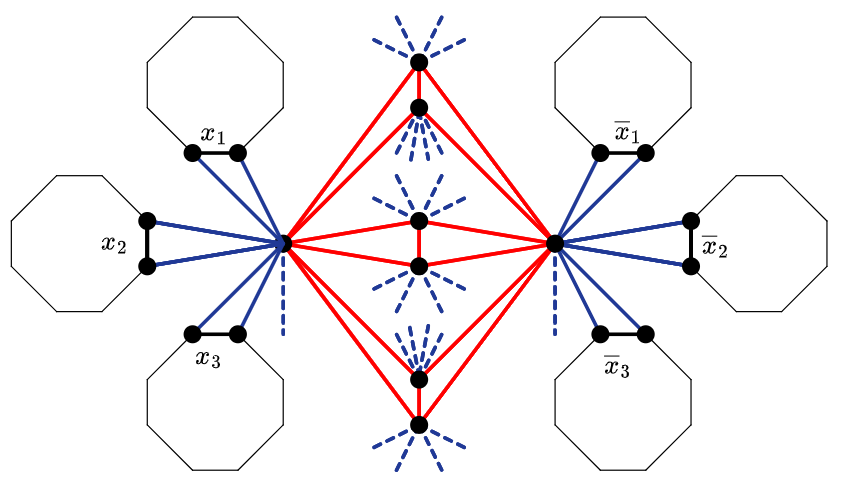}
  \caption{The colors of the internal edges of a clause vertex, as forced by flip constraints.}
  \label{fig:flip-clausegadget-forced}
\end{figure}

This leaves only the edges from the variable gadgets unforced.
Now, consider the leftmost vertex of the clause gadget.
From the forced edges, this vertex has $9$ red edges in its closed neighborhood, and $6$ blue edges.
So, its neighborhood constraint fails to be satisfied if and only if all three of the edges in its neighborhood from the variable gadgets are blue --- which happens if and only if all of $x_1$, $x_2$, and $x_3$ are set \texttt{TRUE}.
Similarly, the rightmost vertex will have its neighborhood constraint fail if and only if all of $x_1$, $x_2$, and $x_3$ are set \texttt{FALSE}.
So, this clause gadget has a valid flip coloring if and only if the assignments to its incident variable gadgets are not all equal.\\

This gives the desired reduction: if $\varphi$ admits a not-all-equal assignment, we can construct a flip coloring by setting each variable gadget to the configuration corresponding to its variable's assignment, and thereby satify all of the clause gadgets --- and likewise, given a flip coloring of the graph, since \texttt{TRUE} and \texttt{FALSE} are the only valid assignments to each variable gadget, we can read off a not-all-equal assignment to $\varphi$.\\

The only piece of the construction still remaining to be addressed is the dangling edges: several of these gadgets involved dangling edges which we asserted had been forced to be blue.
Fortunately, we can easily produce such edges using the gadget in \cref{fig:flip-blue-dangler-generator} (a $4$-clique with $4$ dangling edges from each vertex):

\begin{figure}[h]
  \centering
  \includegraphics[width=.3\linewidth]{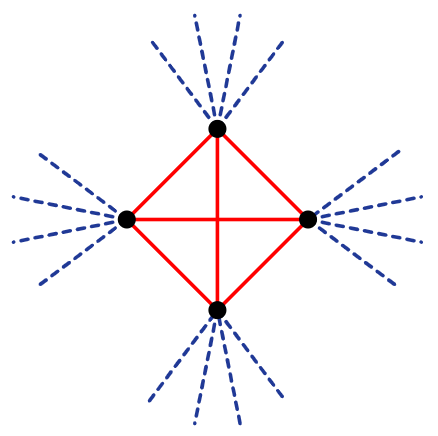}
  \caption{A blue dangler generator, shown in its only valid coloring.}
  \label{fig:flip-blue-dangler-generator}
\end{figure}

Observe the only valid flip-coloring of the this gadget is the one shown, with the clique colored red and the dangling edges colored blue.
So, to finish the construction of our graph, we can create many copies of this blue dangler generator, and pair off the dangling edges from all of the other gadgets in our construction with these gadgets' dangling edges.
(To handle modularity constraints, and to ensure that no new triangles are created, we can actually make $16$ copies of the rest of the construction, and then have each each blue dangler generator send $1$ of its $16$ edges to each of those copies.)
This gives an actual graph with no dangling edges, such that the edges our gadgets needed to have forced to be blue are all indeed forced to be blue.
\end{proof}


\section{Acknowledgements}
The results in this paper were produced as a part of the Duluth REU program; we thank Joe Gallian and Colin Defant for running the program. 
We thank the organizers, advisors, visitors and participants of the program for advice and providing a stimulating research environment.
We are especially grateful to Maya Sankar, Mitchell Lee, and Alek Westover for helpful discussions, and to Alec Sun and Joe Gallian for comments on the manuscript.
Funding was provided by NSF grant no. DMS-2409861, Jane Street Capital, and personal contributions by Ray Sidney and Eric Wepsic.

\printbibliography
\end{document}